\documentclass[leqno,11pt]{amsart}

\usepackage{amssymb}
\usepackage{amsmath}
\usepackage{enumerate}
\usepackage[pdftex]{graphicx}
\usepackage{graphicx, color}

\def\R{\mathbb{R}}

\DeclareMathOperator{\sech}{sech}

\newtheorem{theorem}{Theorem}[section]
\newtheorem{proposition}[theorem]{Proposition}
\newtheorem{corollary}[theorem]{Corollary}

\theoremstyle{definition}
\newtheorem{definition}[theorem]{Definition}
\newtheorem{remark}[theorem]{Remark}

\numberwithin{equation}{section}

\hyphenation{mi-ni-mal geo-me-tric re-vo-lu-tion ge-ne-ra-trix ne-ce-ssa-ry sa-tis-fy ge-ne-ra-tri-ces pres-cri-bing ne-ga-ti-ve li-ne-ar E-xam-ple in-te-res-ting ge-ne-ra-li-zing sy-mme-tric qua-li-ta-ti-ve pa-ra-bo-loid in-te-res-ted cha-rac-te-ri-za-tions}

\begin{document}

\title[Rotational surfaces with prescribed curvatures]{Rotational surfaces \\ with prescribed curvatures }

\author[P. Carretero]{Paula Carretero}
\address{Departamento de Matem\'aticas \\
	Universidad de Ja\'{e}n \\
	23071 Ja\'{e}n, Spain.}
\email{pch00005@red.ujaen.es}

\author[I. Castro]{Ildefonso Castro}
\address{Departamento de Matem\'{a}ticas \\
	Universidad de Ja\'{e}n \\
	23071 Ja\'{e}n, Spain and IMAG, Instituto de Matem\'aticas de la Universidad de Granada.}
\email{icastro@ujaen.es}

\thanks{The second author is partially supported by State Research Agency and European
	Regional Development Fund via the grants no. PID 2020-117868GB-I00 and PID 2022-142559NB-I00, and Maria de Maeztu Excellence Unit IMAG CEX2020-001105-M funded by MCIN/AEI/10.13039/ 501100011033/
}

\subjclass[2010]{Primary 53A04, 53A05}

\keywords{Rotational surfaces, principal curvatures, mean curvature, Gauss curvature, catenoid, torus of revolution, Mylar balloon, Flamm's paraboloid.}

\date{}

\begin{abstract}
	We solve the problem of prescribing different types of curvatures (principal, mean or Gaussian) on rotational surfaces in terms of arbitrary continuous functions depending on the distance from the surface to the axis of revolution. In this line, we get the complete explicit classification of the rotational surfaces with mean or Gauss curvature inversely proportional to the distance from the surface to the axis of revolution. We also provide new uniqueness results on some well known surfaces, such as the catenoid or the torus of revolution,
	and others less well known but equally interesting for their physical applications, such as the Mylar balloon or the Flamm's paraboloid.
\end{abstract}

\maketitle

\section{Introduction}

The rotational surfaces, also known as surfaces of revolution or rotation surfaces, are probably the most studied surfaces in Euclidean 3-space in the context of classical differential geometry. One of the main reasons for this interest could be that any such surface can exhibit interesting geometric properties, since the geometry of the surface is directly affected by the geometry of the generatrix curve. For instance, the meridians and the parallels of a surface of revolution are its curvature lines. Outstanding examples of important results in this setting are the classical theorems of Euler (\cite{E44}), Delaunay (\cite{D41}) and Darboux (\cite{D90}), who, in 1744, 1841 and 1890, classified  minimal, constant mean curvature and constant Gauss curvature rotational surfaces, respectively.

K.~Kentmotsu was likely the first one to study rotational surfaces in Euclidean 3-space with prescribed (arbitrary) mean curvature. Specifically, he showed in \cite{Ke80} that a generating curve of a surface of revolution satisfies a nonlinear differential equation describing the mean curvature. He solved this equation by an elementary method which explicitly calculates solutions by generalized
Fresnel's integrals which involve the mean curvature. 
In this way, for a given continuous function $H(s)$, he constructed a 3-parameter family
of surfaces of revolution admitting $H(s)$ as the mean curvature ($s$ being the arc parameter of the generating curve).  In particular, he easily described all complete surfaces of revolution with constant mean curvature in a way different from that in Delaunay \cite{D41}. In addition, he also considered in \cite{Ke80} the case $H(s)=s/2$.

Recently, there has been a renewed interest in studying (hyper)surfaces with prescribed curvature (either mean curvature or Gaussian curvature or a linear combination of both) in terms of a prescribed function of its Gauss map (see, for instance,  \cite{BGM20a}, \cite{BGM20b}, \cite{BO22}, \cite{BO23}). 
By means of a phase plane analysis and under moderate assumptions
on the prescribed function, the authors provide some classifications theorems that generalize some important results in the theory of constant mean and Gauss curvature surfaces, linear Weingarten surfaces and self-translating solitons of the mean curvature flow.

Based on the notion of {\em geometric linear momentum} of a plane curve (see \cite{CCI16} and \cite{CCIs20}), we made a contribution to the study of rotational Weingarten surfaces in Euclidean 3-space (see \cite{CC22}) reducing any type of Weingarten condition (i.e.\ a functional relation between the principal curvatures, see \cite{W61} or \cite{Ch45}) on a rotational surface to a first order ordinary differential equation on the momentum of the generating curve. 
Making use of a similar technique, we now deal with the problem of prescribing different types of curvatures (principal curvatures, mean curvature or Gauss curvature) on rotational surfaces in terms of arbitrary continuous functions depending on the distance from the surface to the axis of revolution. 
Compared to the above mentioned Kenmotsu's idea, instead of taking an \textit{intrinsic} parameter such as the arc length of the generating curve as the independent variable of the functions to be prescribed, we choose an \textit{extrinsic} natural variable such as the distance from the surface to the axis of revolution and the calculations are equally feasible to carry out. Our ambitious goal is to reach the corresponding rotational surfaces in a closed form, that is, as explicit as possible (in terms of elementary functions) trough quadratures.

Our key tool, the geometric linear momentum $\mathcal K$ of a plane curve, was crucial in \cite{CCI16} and \cite{CCIs20} for the study of plane curves whose curvature depends on the distance to a line (a particular case of a more general problem posed by D.~Singer in \cite{S99}). $\mathcal K$ is a smooth function associated with any plane curve which completely determines it (modulo a family of distinguished translations) by quadratures in a constructive way, according to its relative position with respect to a fixed line (see Corollary \ref{cor:key}). 
In addition, it can be interpreted as an anti-derivative of the curvature $\kappa $ of the curve, when this is expressed as a function of the distance $x$ from the fixed line. Therefore, it seems natural that, when dealing with surfaces of revolution generated by the rotation of a plane curve (the generatrix curve) about a co-planar fixed line (the axis of revolution), the geometric linear momentum of the generatrix with respect to the axis of revolution plays a predominant role in controlling the geometry of the surface of revolution. We summarize it in Corollary \ref{cor:keysurfaces}, showing that any surface of revolution is uniquely determined, up to translations along the axis of revolution, by the geometric linear momentum of the generatrix curve. 

In this paper we will first pay attention to what we will call \textit{Hopf-Kühnel surfaces} (see \cite{K15}). They are introduced in Definition \ref{def:Kuhnel} trough an integral parametrization of their generating curves $\alpha_q$, $q\neq 0$. This family includes some well known surfaces, such as the round sphere ($q=1$) or the catenoid ($q=-1$), and others less well known but equally interesting, such as the Mylar balloon ($q=2$), the onducycloid ($q=1/2$) or the Flamm's paraboloid ($q=-1/2$). See Figure \ref{fig:Kuhnel surfaces}.
According to Paulsen \cite{P94}, the \textit{Mylar balloon} is constructed by taking two identical circular
disks of Mylar\footnote{Mylar is an Americanism meaning a trademark for a polyester made in extremely thin sheets of great tensile strength.}, sewing them together along their boundaries, and then inflating the resulting
object with either air or helium. Surprisingly enough, these balloons turn out not to be
spherical as one might expect based on the well-known fact that the sphere possesses
the maximal volume for a given surface area (see \cite{MO03}).
Geometrically they can be described as the surfaces of revolution generated by a special class of elastic curves called lintearias (see Section \ref{Sect2}). The \textit{onducycloid} is defined in \cite{CC22} as the surface of revolution generated by rotation of the cycloid around its base.
The \textit{Flamm's paraboloid} is useful for visualizing the spatial curvature of the Schwarzschild metric and has the property that distances measured within it match distances in the Schwarzschild metric (see, for example, \cite{Kr60}). 
In cylindrical coordinates $(r,\theta,z)$ of $\R^3$, it is given by $z(r)=2\sqrt{r_S(r-r_S)}$, where $r_S$  is the Schwarzschild radius of the massive body.
We are also interested in the \textit{elasticoids}, defined in \cite[Section 3.5]{CC22} as the rotational surfaces generated by the rotation of an elastic curve around its directrix (see Figure \ref{fig:Elasticoids}). 

We characterize in Theorem \ref{Th:Kuhnel} the Hopf-Kühnel surfaces as the only rotational surfaces (apart from the plane) satisfying the linear Weingarten relation $k_{\text m}=q\, k_{\text p}$, $q\neq0$, where $k_{\text m}$ (resp.\ $k_{\text p}$) stands for the principal curvature along meridians (resp.\ parallels).
In this way, not only we generalize \cite[Theorem 6.5]{MO03} on the Mylar balloon corresponding to $q=2$, but we also provide a much more explicit and specific version of \cite[Theorem 3.2]{LP20}. We also emphasize that putting $q=-1$ in Theorem \ref{Th:Kuhnel} we provide a very simple proof of Euler's theorem (cf.\ \cite{E44}) characterizing the catenoid as the only minimal (non flat) rotational surface in Euclidean 3-space.

In Section \ref{Sect3}, we address the problem of whether a rotational surface is uniquely determined if the principal curvatures are prescribed for this kind of surfaces. Theorem \ref{th:Prescribe_kp} gives an affirmative answer when the principal curvature on parallels is prescribed in terms of a function that depends on the distance $x$ from the surface to the axis of revolution. In particular, we show in Corollary \ref{cor:Hopf Kuhnel} that the catenoids are the only surfaces of revolution with $k_{\text p}(x)=c/x^2$, $x \geq c>0$, and the Mylar balloons the only with  $k_{\text p}(x)=c\,x$, $0<x^2\leq1/c$. However, if we prescribe the principal curvature on meridians in terms of a function that depends on the distance from the surface to the axis of revolution, we obtain this time a one-parameter family of rotational surface, see Theorem \ref{th:Prescribe_km}. In this way, we characterize in Corollary \ref{cor:Eleaticoids} the elasticoids as the only rotational surfaces with $k_{\text m}(x)=2ax$, $a>0$. We finish Section \ref{Sect3} classifying in Theorem \ref{Th:exp} the surfaces of revolution with $k_{\text m}(x)=a\, e^x$, $a >0$, since we are able to get an interesting explicit  family including the rotational surface generated by the catenary of equal strength (also known as the grim reaper curve) or the alysoid. See Figures \ref{fig:SuperExpLess1}, \ref{fig:Alysoid} and \ref{fig:SuperExpMoreNeg1}.

Finally, we study in Section \ref{Sect4} the problem of prescribing the mean curvature (resp.\ the Gauss curvature) on a rotational surface in terms of a function $H=H(x)$ (resp.\ $K_G=K_G(x)$) that depends on the distance $x$ from the surface to the axis of revolution. In both generic cases we obtain again uniparametric families and illustrate our constructive method classifying two families with natural hypotheses in this context:
the surfaces of revolution with $H(x)=\mu/x$, $\mu >0$, $x>0$, in Theorem \ref{th:Class_Hinvx}, and those ones with $K_G(x)=\mu/x$, $\mu >0$, $x\neq 0$ in Theorem \ref{th:Class KGinverse}. In the first case, we arrive at a family of surfaces of conical type including the circular cone (see Figures \ref{fig:H05x surfaces}, \ref{fig:Hthetax surfaces} and \ref{fig:Hdeltax surfaces}) and, in the second one, the family of surfaces  generated by the rotation of the cycloids around its base after a translation in the orthogonal direction to its base (see Figures \ref{fig:Cycloids} and \ref{fig:transonducycloidsycloids}). Moreover, we highlight that the mean and Gaussian curvatures on a rotational surface cannot be prefixed simultaneously in terms of arbitrary functions depending on the distance from the surface to the axis of revolution, because they are necessarily related. We exhibit explicitly the constraint in formula \eqref{eq:HlinkKG} and deduce that, once prescribed $H=H(x)$ for a rotational surface, its Gauss curvature $K_G=K_G(x)$ belongs to a one-parameter family of functions. In this line, we get in Theorem \ref{Th:H KG mononomial} an uniqueness result for the Hopf-Kühnel surfaces with interesting consequences such as new characterizations of the Mylar balloons (Corollary \ref{cor:Mylar}), the onducycloids (Corollary \ref{cor:Onducycloids}), the Flamm's paraboloids (Corollary \ref{cor:Flamm paraboloids}) and the tori of revolution (Corollary \ref{cor:tori}).


\section{The geometric linear momentum of a rotational surface}\label{Sect2}

We deal with {\em rotational surfaces}, also called
{\em surfaces of revolution}. They are surfaces globally invariant under the action of any rotation around a fixed line called {\em axis of revolution}.
The rotation of a curve (called {\em generatrix} or {\em profile}) around the fixed line generates a surface of revolution.
The sections of a surface of revolution by half-planes delimited by the axis of revolution, called {\em meridians}, are special generatrices.
The sections by planes perpendicular to the axis are circles called {\em parallels} of the surface.

We denote by $S_\alpha$ the rotational surface in $\R^3$ generated by the rotation around the $z$-axis of a plane curve $\alpha$ in the $xz$-plane. That is, $\alpha $ is the generatrix curve that we can consider parameterized by arc-length, whose parametric equations are given by $x=x(s)$, $y=0$, $z= z(s)$, $s\in I \subseteq \R$. The function $x=x(s)$, $s\in I \subseteq \R$, represents here the signed distance from the point $\alpha(s)$ to the $z$- axis of revolution. Then $S_\alpha $ is parameterized by
$$ S_\alpha \equiv X(s,\theta)=\left(x(s)  \cos \theta, x(s)\sin \theta, z(s)\right), \
(s,\theta)\in I \times (-\pi,\pi).$$
If $X=(x_1,x_2,x_3)$, it is clear that $x_1^2+x_2^2=x^2$ and $x_3=z$.

The rotational surface $S_\alpha $, $\alpha =(x,z)$, is regular if $x\neq 0$. Those points $X(s_0,-)$ such that $x(s_0)=0$ are singular points of $S_\alpha $, unless $\alpha $  meets orthogonally the $z$-axis of revolution  at $\alpha (s_0)=(0,z(s_0))$. If we consider the generatrix curve $\alpha$ as a graph $z=z(x)$, $x \in D \subseteq \R$, we can reparameterize $S_\alpha $ as follows:
$$ S_\alpha \equiv X(x,\theta)=\left(x  \cos \theta, x \sin \theta, z(x) \right), \
(x,\theta)\in D \times (-\pi,\pi).$$
If $(x,z)$ is a generatrix curve of a surface of revolution, so is $(-x,z)$ 
and therefore the rotational surfaces generated by both curves are congruent, that is, $S_{(x,z)}\sim S_{(-x,z)}$. 
In addition, obviously $S_{(x,z+z_0)}\sim S_{(x,z)}$, 
$\forall z_0 \in \R$.

Given any plane curve  $\alpha$ in the $xz$-plane, we introduced in \cite[Section 2]{CC22} the \textit{geometric linear momentum} of $\alpha $ (with respect to the $z$-axis) as a smooth function assuming values in $[-1,1]$ that completely determines it (up to translations in the $z$-direction). 
It is defined by $\mathcal K(s)= \dot z (s)$, where the dot $\, \dot{}\, $ means derivation with respect to the arc parameter $s$. Geometrically, $\mathcal K$ controls the angle of the Frenet frame of the curve with the coordinate axes. 
Moreover, in physical terms, $\mathcal K= \mathcal K (s)$ may be described as the linear momentum (with respect to the $z$-axis) of a particle of unit mass with unit speed and trajectory $\alpha (s)$.
We point out that $\mathcal K$  is well defined, up to the sign, depending on the orientation of $\alpha$.
\begin{remark}\label{re:no ppa}
	If the plane curve $\alpha=(x,z)$ is not necessarily parameterized by arc length, i.e. $\alpha = \alpha (t)$, $t$ being any parameter, one can compute the geometric linear momentum $\mathcal K = \mathcal K (t)$ by means of
	$$
	\mathcal K (t)= \frac{z'(t)}{|\alpha ' (t)|},
	$$
	where $'$ denotes derivation respect to $t$.
\end{remark}
The importance of the geometric linear momentum $\mathcal K$ lies in the fact that it allows to determine by quadratures, in a constructive explicit way, the plane curves $\alpha=(x,z)$ such that its curvature depends on the distance to the $z$-axis, that is, it is given as a function of $x$, i.e.\ $\kappa=\kappa(x)$. In this case, $\mathcal K = \mathcal K (x)$ too and satisfies $\mathcal K'(x)=\kappa (x)$ and
the algorithm to recover the curve $\alpha=(x,z)$ involves the following computations (see \cite[Remark 2]{CC22}):
\begin{enumerate}[\rm (i)]
	\item[\rm (i)] Arc-length parameter $s$ of $\alpha=(x,z) $ in terms of $x$, defined ---up to translations of the parameter--- by the integral:
\begin{equation}\label{eq:s(x)}
	s=s(x)=\int\!\frac{dx}{\sqrt{1-\mathcal K(x)^2}},
\end{equation}
	where $-1<\mathcal K(x)<1$, and
	inverting $s=s(x)$ to get $x=x(s)$. 
	\item[\rm (ii)] $z$-coordinate of the curve ---up to translations along $z$-axis--- by the integral:
\begin{equation}\label{eq:z(s)}
	z=z (s)=\int \! \mathcal K(x(s))\, ds .
\end{equation}
\end{enumerate}
Alternatively, if we eliminate $ds$ in the above integrals, we obtain:	 
\begin{equation}\label{eq:truco}
	z=z(x)= \int \frac{\mathcal K(x)\,dx}{\sqrt{1-\mathcal K(x)^2}}.
\end{equation}	 
Thus we can summarize the determining role of the geometric linear momentum in the next result.
\begin{corollary}\cite[Corollary 1]{CC22}
	\label{cor:key}
	Any plane curve $\alpha =(x,z)$, with $x$ non-constant, is uniquely determined by its geometric linear momentum $\mathcal K$ as a function of its distance to $z$-axis, that is, by $\mathcal K= \mathcal K(x)$. 
	The uniqueness is modulo translations in the $z$-direction.
	Moreover, the curvature of $\alpha $ is given by $\kappa (x)=\mathcal K' (x)$.
\end{corollary}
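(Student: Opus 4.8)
This corollary merely organizes the quadrature formulas \eqref{eq:s(x)}, \eqref{eq:z(s)} and \eqref{eq:truco} into an existence-and-uniqueness statement, so the plan is to make those computations rigorous and to track carefully the role of the integration constants. First I would take $\alpha$ parametrized by arc length, $\alpha(s)=(x(s),z(s))$ with $\dot x^2+\dot z^2=1$, recall that $\mathcal K(s):=\dot z(s)$, and deduce $\dot x(s)^2=1-\mathcal K(s)^2$. Since $x$ is non-constant, $\dot x$ does not vanish identically; on an interval where $\dot x\neq 0$ the map $s\mapsto x(s)$ is a diffeomorphism onto its image, and, after fixing the orientation of $\alpha$, we may take $\dot x=\sqrt{1-\mathcal K^2}>0$ there, so that $-1<\mathcal K<1$ and $x$ itself can serve as the independent variable. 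This is Remark \ref{re:no ppa} read in reverse.

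Next I would recover $\alpha$ from the prescribed function $\mathcal K=\mathcal K(x)$. From $ds/dx=1/\dot x=1/\sqrt{1-\mathcal K(x)^2}$ one gets \eqref{eq:s(x)}, which fixes $s$ as a function of $x$ up to an additive constant --- an inessential shift of the arc parameter --- and inverting yields $x=x(s)$. Then $dz/ds=\mathcal K(x(s))$ integrates to \eqref{eq:z(s)}, whose single integration constant is exactly a translation along the $z$-axis; eliminating $ds$ between the two displays gives \eqref{eq:truco}. Uniqueness is now immediate: if $(x,z_1)$ and $(x,z_2)$ have the same momentum function $\mathcal K(x)$, then by \eqref{eq:truco} $z_1'(x)=z_2'(x)$, hence $z_1-z_2$ is constant and the two curves differ only by a $z$-translation. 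Conversely, the same construction shows that every admissible $\mathcal K(x)$ does arise from such a curve.

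For the curvature I would invoke the Frenet equations of the planar curve $\alpha$: with unit tangent $T=(\dot x,\dot z)$ and unit normal $N=(-\dot z,\dot x)$, from $\dot T=\kappa N$ we read off $\ddot z=\kappa\,\dot x$. Regarding $\mathcal K=\dot z$ as a function of $x$ and using the chain rule, $\mathcal K'(x)=\ddot z/\dot x=\kappa$, so the curvature of $\alpha$ expressed as a function of the distance to the axis is precisely $\mathcal K'(x)$ (equivalently, $\mathcal K$ is an antiderivative of $\kappa(x)$), as claimed.

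The point requiring genuine care --- and the main obstacle --- is the behaviour at values $x_0$ with $\mathcal K(x_0)^2=1$, i.e.\ where $\dot x=0$ and the curve is tangent to the parallel $\{x=x_0\}$: there the graph representation $z=z(x)$ and the integrand $1/\sqrt{1-\mathcal K^2}$ both degenerate, and one must check that the arcs produced on the two sides of $x_0$ glue into a single smooth curve and that the reconstruction stays unique modulo the $z$-translation. This is the standard subtlety already dealt with in \cite{CCI16} and \cite{CC22}, where it is handled by returning to the arc-length parametrization near such points (there $s\mapsto x(s)$ has a nondegenerate critical point, so $x$ acts as a local reflection and $\alpha$ is smooth across it); I would follow that treatment verbatim.
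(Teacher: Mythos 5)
Your proposal is correct and follows essentially the same route as the paper, which establishes this corollary (quoting \cite{CC22}) precisely via the quadrature algorithm \eqref{eq:s(x)}--\eqref{eq:truco} stated immediately beforehand, with uniqueness up to $z$-translation coming from the single remaining integration constant and $\kappa(x)=\mathcal K'(x)$ from the chain rule applied to $\mathcal K=\dot z$. Your additional Frenet-frame verification of the curvature identity and your explicit flagging of the turning points where $\mathcal K(x)^2=1$ are consistent with (and slightly more careful than) the treatment the paper relies on.
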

As we mentioned before, if we translate the generatrix curve $\alpha$ of a rotational surface $S_\alpha$  along $z$-axis, obviously we obtain a surface congruent to $S_\alpha$. An immediate consequence of Corollary \ref{cor:key} is then the following key result for our purposes: 
\begin{corollary}\cite[Corollary 2]{CC22}
	\label{cor:keysurfaces}
	Any rotational surface $S_\alpha$, with generatrix curve $\alpha = (x,z)$, is uniquely determined, up to $z$-translations, by the geometric linear momentum $\mathcal K=\mathcal K (x)$ of its generatrix curve, being $x$ non-constant. 
\end{corollary}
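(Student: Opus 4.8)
The plan is to deduce the statement directly from Corollary \ref{cor:key}. First I would recall that the rotational surface $S_\alpha$ is, by its very construction, completely recovered from its generatrix curve $\alpha=(x,z)$ via the parametrization $X(s,\theta)=(x(s)\cos\theta,\,x(s)\sin\theta,\,z(s))$ (or, in graph form, $X(x,\theta)=(x\cos\theta,\,x\sin\theta,\,z(x))$). Hence knowing $\alpha$ is equivalent to knowing $S_\alpha$. Moreover, knowing $S_\alpha$ up to translations along the $z$-axis is equivalent to knowing $\alpha$ up to translations in the $z$-direction: replacing $\alpha=(x,z)$ by $(x,z+z_0)$ turns $X(s,\theta)$ into $X(s,\theta)+(0,0,z_0)$, so the two surfaces differ exactly by the vertical translation of vector $(0,0,z_0)$, which is precisely the already observed relation $S_{(x,z+z_0)}\sim S_{(x,z)}$.

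Next, since $x$ is assumed non-constant, Corollary \ref{cor:key} applies to the generatrix $\alpha=(x,z)$ and yields that $\alpha$ is uniquely determined, modulo translations in the $z$-direction, by its geometric linear momentum expressed as a function of the distance to the axis, that is, by $\mathcal K=\mathcal K(x)$; the reconstruction is moreover explicit through the quadratures \eqref{eq:s(x)} and \eqref{eq:z(s)} (equivalently \eqref{eq:truco}). Combining this with the previous paragraph, $S_\alpha$ is determined, up to $z$-translations, by $\mathcal K(x)$, which is the assertion.

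I do not expect a genuine obstacle here: the result is essentially a transcription of Corollary \ref{cor:key} from the language of plane curves to that of their surfaces of revolution, so the ``proof'' is mostly a matter of unwinding definitions. The only point deserving a word of care is the bookkeeping of the indeterminacies — verifying that the ``modulo $z$-translations'' ambiguity for the generatrix corresponds exactly to the ``up to translations along the axis of revolution'' ambiguity for the surface, and that no further ambiguity is introduced. In particular, although $S_{(x,z)}\sim S_{(-x,z)}$, prescribing $\mathcal K$ as a function of the \emph{signed} distance $x$ already fixes the sign of $x$ along the profile, so this congruence does not add any slack to the uniqueness claim.
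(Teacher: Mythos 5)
Your proposal is correct and follows exactly the paper's route: the paper likewise derives this as an immediate consequence of Corollary \ref{cor:key}, noting that a $z$-translation of the generatrix produces a congruent surface, so the uniqueness modulo $z$-translations of the curve transfers directly to the surface. Your extra remark on the signed distance and the congruence $S_{(x,z)}\sim S_{(-x,z)}$ is a harmless refinement of the same bookkeeping.
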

\begin{remark}\label{re:cylinder}
	The only rotational surface excluded in Corollary \ref{cor:keysurfaces} is the {\em right circular cylinder}, corresponding to $x$ being constant. We recall that it is a flat rotational surface and its principal curvatures are $0$ (along generatrices lines) and $1/a$ (along parallel circles), $a>0$ being the radius of the cylinder.
\end{remark}

Making use of Corollary \ref{cor:keysurfaces}, we can list the following characterizations of some simple surfaces of revolution (see \cite[Proposition 1]{CC22}):

\begin{enumerate}
	\item Any horizontal \textit{plane} is uniquely determined by the geometric linear momentum $\mathcal K \equiv 0$.
	\item The \textit{circular cone} with opening $\theta_0 \in (0, \pi/2)$, given by $x_1^2+x_2^2=\cot^2\theta_0\,x_3^2$, is uniquely determined by the geometric linear momentum $\mathcal K \equiv \sin \theta_0$.
	\item The \textit{sphere} of radius $R>0$, given by $x_1^2+x_2^2+x_3^2=R^2$, is uniquely determined by the geometric linear momentum $\mathcal K (x)=x/R$.
	\item The \textit{torus of revolution} with major radius $|a|\neq 0$ and minor radius $R>0$, given by $(\sqrt{x_1^2+x_2^2}-a)^2+x_3^2=R^2$, is uniquely determined by the geometric linear momentum  $\mathcal K (x)=(x-a)/R$.
\end{enumerate}

We can confirm the result established in Corollary \ref{cor:keysurfaces} when we study the geometry of $S_\alpha$ through its first and second fundamental forms, $I$ and $II$, since a direct computation, using that $\kappa (x)=\mathcal K ' (x) $, shows that both can be expressed only in terms of the geometric linear momentum $\mathcal K$ and, of course, the non constant signed distance $x$ from the surface to the axis of revolution:
$$ I\equiv ds^2 + x^2 \, d\theta ^2, \quad II\equiv \mathcal K ' (x) \, ds^2 + x \, \mathcal K (x) \, d\theta^2. $$

Therefore we get the following expressions for the principal curvatures $\kappa_1$ and $\kappa_2$, whose curvature lines are the meridians (m) and the parallels (p) respectively of the rotational surface $S_\alpha$:
\begin{equation}\label{eq:prin curv}
	\kappa_1\equiv k_{\text m}= \mathcal K ' (x), \quad \kappa_2\equiv k_{\text p}= \frac{\mathcal K (x)}{x}.
\end{equation}
Thus, the mean curvature $H$ of $S_\alpha $ is given by
\begin{equation}\label{eq:H}
	2H=\mathcal K ' (x)+\frac{\mathcal K (x)}{x},
\end{equation}
and the Gauss curvature $K_{\text G}$ of $S_\alpha $ is given by
\begin{equation}\label{eq:KGauss}
	K_{\text G}=\frac{\mathcal K (x) \mathcal K ' (x) }{x}.
\end{equation}

On the other hand, Weingarten surfaces are defined by a functional relation between their principal curvatures.
For rotational Weingarten surfaces,
we simply write $$\Phi(k_{\text m}, k_{\text p})=0.$$
Hence, taking into account \eqref{eq:prin curv}, we easily deduce that the above functional relation translates into a first-order differential equation for the geometric linear momentum $\mathcal K=\mathcal K (x)$ determining $S_\alpha$ according to Corollary \ref{cor:keysurfaces}: 
$$ \hat \Phi (x,\mathcal K  (x),\mathcal K ' (x))=0. $$

Now we deal with two interesting families of rotational surfaces that will play an important role along the paper.
\begin{definition}[Hopf-Kühnel surfaces]\label{def:Kuhnel}
	 Consider the rotational surfaces $S_{\alpha_q}$, $q\neq 0$, where $\alpha_q=(x,z)$ is given by
	 $$
	 x= a \cos ^{1/q} t, \ z= \frac{a}{q}\int_0^t \cos ^{1/q} v \, dv, \ t\in (-\pi/2,\pi/2), \ a>0. 
	 $$
	 We call them {\em Hopf-Kuhnel surfaces} because they were first introduced by Heinz Hopf  \cite{H51} in 1951 and subsequently studied by Wolfgang Kühnel \cite{K15} in 1999 (see also \cite{F93}, \cite{KS05} and \cite{LP20} among others).
\end{definition}
Using Remark \ref{re:no ppa}, it is an exercise to check that the linear geometric momentum of $\alpha_q$ is given by
\begin{equation}\label{eq:K Kuhnel}
	\mathcal K (x)= \frac{x^q}{a^q}.
\end{equation}

In the next result, we determine the surfaces of revolution such that both principal radii of curvature (or, equivalently, both principal curvatures) are proportional at each point.
	
	\begin{theorem}\label{Th:Kuhnel}
		The only rotational surfaces satisfying the linear Weingarten relation $k_{\text m} = q\,  k_{\text p}$, $q\neq 0$, are the plane and the Hopf-Kühnel surfaces $S_{\alpha_q}$.
	\end{theorem}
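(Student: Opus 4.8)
The plan is to turn the Weingarten hypothesis into a first-order ODE for the geometric linear momentum and integrate it in closed form, then invoke the uniqueness statement of Corollary \ref{cor:keysurfaces}.

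First I would dispose of the one surface not covered by Corollary \ref{cor:keysurfaces}, namely the right circular cylinder of Remark \ref{re:cylinder}: there $k_{\text m}=0$ while $k_{\text p}=1/a\neq 0$, so $k_{\text m}=q\,k_{\text p}$ would force $q=0$, against the hypothesis. For every other rotational surface $S_\alpha$ the generatrix $\alpha=(x,z)$ has $x$ non-constant, hence by Corollary \ref{cor:keysurfaces} it is determined up to $z$-translation by its momentum $\mathcal K=\mathcal K(x)$, and by \eqref{eq:prin curv} the relation $k_{\text m}=q\,k_{\text p}$ is exactly
\begin{equation*}
x\,\mathcal K'(x)=q\,\mathcal K(x).
\end{equation*}

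Next I would integrate this equation. Since $S_{(x,z)}\sim S_{(-x,z)}$ we may assume $x>0$ on the domain of the generatrix, where the equation is linear in $\mathcal K$ with continuous coefficient $q/x$. If $\mathcal K$ vanishes at one point, uniqueness for this linear equation forces $\mathcal K\equiv 0$ on the (connected) domain, and then the list of elementary examples preceding Remark \ref{re:cylinder} identifies $S_\alpha$ with a horizontal plane. Otherwise $\mathcal K$ never vanishes, separation of variables yields $\mathcal K(x)=c\,x^q$ with $c\neq 0$, and since the reflection $z\mapsto -z$ changes $S_\alpha$ into a congruent surface while sending $\mathcal K$ to $-\mathcal K$, we may take $c>0$ and write $c=1/a^q$ for a suitable $a>0$ (take $a=c^{-1/q}$ if $q>0$ and $a=c^{1/|q|}$ if $q<0$). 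The universal bound $|\mathcal K|\le 1$ then confines the domain to $0<x\le a$ when $q>0$ and to $x\ge a$ when $q<0$, which is precisely the range of $x=a\cos^{1/q}t$, $t\in(-\pi/2,\pi/2)$.

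Finally, $\mathcal K(x)=x^q/a^q$ is by \eqref{eq:K Kuhnel} exactly the geometric linear momentum of the Hopf-Kühnel generatrix $\alpha_q$, so Corollary \ref{cor:keysurfaces} gives $S_\alpha\sim S_{\alpha_q}$. The converse is immediate: for the plane $\mathcal K\equiv 0$ satisfies the relation trivially, and for $S_{\alpha_q}$ one has $\mathcal K'(x)=q\,x^{q-1}/a^q=q\,\mathcal K(x)/x$, that is $k_{\text m}=q\,k_{\text p}$ by \eqref{eq:prin curv}. I do not anticipate any genuine obstacle in this argument; the only points demanding care are the separate treatment of the cylinder, the case distinction according to whether $\mathcal K$ vanishes identically, and the sign and scale normalizations that bring $c\,x^q$ into the canonical form $x^q/a^q$ with $a>0$.
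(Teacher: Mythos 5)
Your proposal is correct and follows essentially the same route as the paper: it reduces the Weingarten relation via \eqref{eq:prin curv} to the separable o.d.e.\ $\mathcal K'(x)=q\,\mathcal K(x)/x$, identifies the trivial solution $\mathcal K\equiv 0$ with the plane and the solution $\mathcal K(x)=c\,x^q$ (normalized to $x^q/a^q$) with the Hopf-K\"uhnel momentum \eqref{eq:K Kuhnel}, and concludes by Corollary \ref{cor:keysurfaces}. Your extra care with the cylinder, the sign normalization of $c$, and the domain restriction imposed by $|\mathcal K|\le 1$ are welcome refinements of details the paper leaves implicit, but they do not change the argument.
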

	
	\begin{proof}
		First, the plane trivially satisfies $k_{\text m} = q\,  k_{\text p}$, for any $q\neq 0$. Using \eqref{eq:K Kuhnel} in \eqref{eq:prin curv}, it is easy to check that $S_{\alpha_q}$ verifies precisely $k_{\text m} = q\,  k_{\text p}$.
		
		On the other hand, from \eqref{eq:prin curv} 
		the~linear Weingarten relation $k_{\text m}=q\,  k_{\text p}$ translates into the separable~o.d.e.
		$$\mathcal K ' (x)=q\, \mathcal K(x)/x. $$
		Its constant solution $\mathcal K \equiv 0$ leads to the plane.
		Its non-constant solution is given by $\mathcal K (x)= c \, x^q$, $c>0$. Taking $a=1/c^{1/q}$, we arrive at \eqref{eq:K Kuhnel}. The proof follows from Corollary \ref{cor:keysurfaces}.
	\end{proof}	

The family of Hopf-Kühnel surfaces includes very interesting rotational surfaces for particular values of the constant $q\neq 0$. As a consequence of Corollary \ref{cor:keysurfaces} and \eqref{eq:K Kuhnel}, the following surfaces are uniquely determined, up to translations in $z$-direction, by the geometric linear momentum indicated in each case (see also \cite[Proposition 1]{CC22}):
	\begin{enumerate}
		\item $q=1$: The \textit{sphere} of radius $R>0$, given by $x_1^2+x_2^2+x_3^2=R^2$, by $\mathcal K (x)=x/R$.
		\item $q=-1$: The \textit{catenoid} of chord $a\! >\!0$, given by $x_1^2+x_2^2 \!=\! a^2 \cosh^2 (x_3/a)$, by $\mathcal K (x)=a/x$.
	\item $q=2$: The \textit{Mylar balloon} of inflated radius $r>0$ (see \cite{MO03}),  by $\mathcal K (x)=x^2/r^2$.
	\item $q=1/2$: The \textit{onducycloid} of radius $R>0$, defined as the surface generated by rotation of the cycloid around its base, by $\mathcal K (x)= \sqrt x/\sqrt{2R}$.
	\item $q=-1/2$: The \textit{Flamm's paraboloid} of Schwarzchild radius $r_S>0$ (see \cite{Kr60}), defined as the surface generated by the rotation of the parabola with vertex $(r_S,0)$ and focus $(2r_S,0)$, around its directrix line, given by $x_3^2=4r_S(\sqrt{x_1^2+x_2^2}-r_S)$, is uniquely determined by the geometric linear momentum  $\mathcal K (x)= \sqrt{r_S}/\sqrt{x}$.
	\end{enumerate}
The above Hopf-Kühnel surfaces are depicted in Figure \ref{fig:Kuhnel surfaces}.

\begin{figure}[h!]
	\begin{center}
		\includegraphics[height=4.8cm]{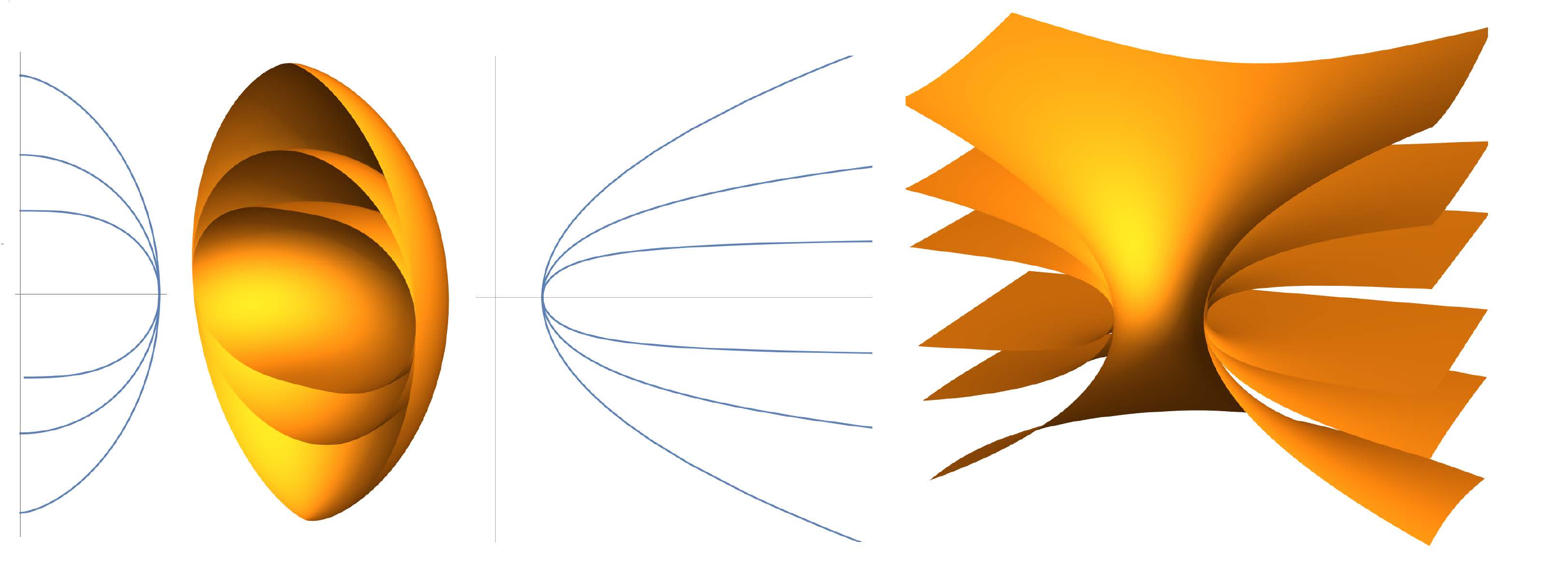}
		\caption{Open views of some Hopf-Kühnel surfaces $S_{\alpha_q}$. From top to bottom, on the left, for $q=1/2$ (onducycloid), $q=1$ (sphere) and $q=2$ (Mylar balloon); on the right, for $q=-1/2$ (Flamm's paraboloid), $q=-1$ (catenoid) and $q=-2$.}
		\label{fig:Kuhnel surfaces}
	\end{center}
\end{figure}

\begin{remark}\label{Re:Kuhnel}
	Theorem \ref{Th:Kuhnel} generalizes \cite[Theorem 6.5]{MO03} on the Mylar balloon corresponding to $q=2$. We also point out that putting $q=-1$ in Theorem \ref{Th:Kuhnel} we recover Euler's theorem (cf.\ \cite{E44}) characterizing the plane and the catenoid as the only minimal rotational surfaces in Euclidean 3-space.
\end{remark}

\medskip

\begin{definition}[Elasticoids]\label{def:elasticoids}
The elastic curves can be defined as those plane curves whose curvature is, at all points, proportional to the distance to a fixed line, called the directrix. 
 We defined in \cite[Section 3.5]{CC22} the \textit{elasticoids} as the rotational surfaces generated by the rotation of an elastic curve around its directrix. 
\end{definition}
With the elastic curve in the $xz$-plane and the directrix as the $z$ axis, the above condition can be written as $\kappa (x) = 2a x$, $a>0$, and then we have a one-parameter family of elastic curves $e_k$, $k>-1$, determined, up to $z$-translations, by the geometric linear momenta
\begin{equation}\label{eq:K elasticoids}
	\mathcal K (x)= ax^2 - k, \ k>-1,
\end{equation}
since $\mathcal K (x)^2<1$.
Using Corollary \ref{cor:keysurfaces}, the elasticoids $S_{e_k}$, $k>-1$, are uniquely determined, up to translations along $z$-axis, by the geometric linear momenta given in \eqref{eq:K elasticoids}. 
Following \cite{F93} or \cite{S08}, we can distinguish seven types of elasticoids according to the seven types of elastic curves depending on the possible ranges of values of the \textit{modulus} $k\in (-1,+\infty) $ of the elliptic functions which appear in the parametrizations of the elastic curves generating the elasticoids. See Figure \ref{fig:Elasticoids}.

\begin{figure}[h!]
	\begin{center}
		\includegraphics[height=7cm]{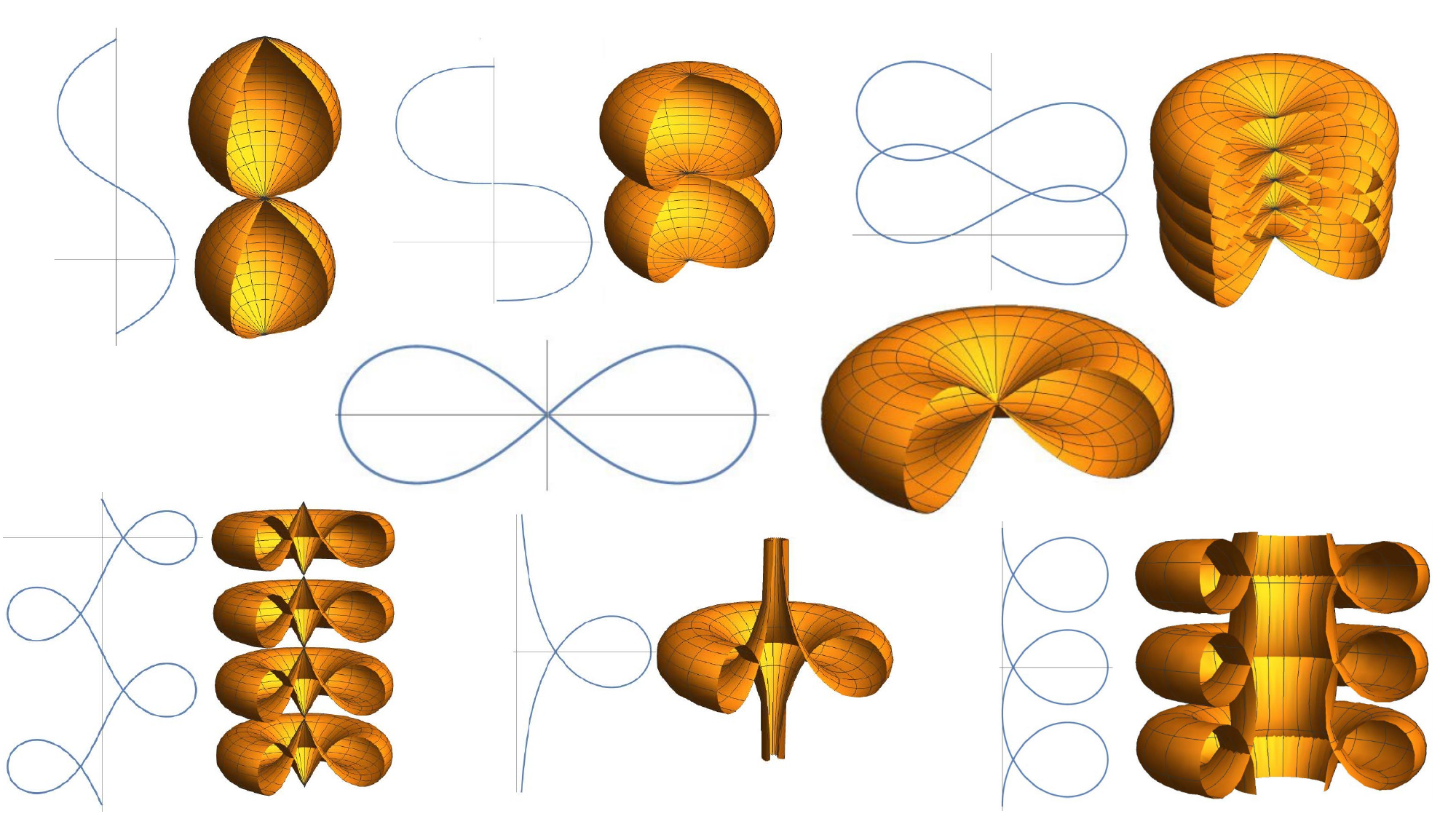}
		\caption{Open views of the elasticoids $S_{e_k}$, $k>-1$: surfaces of revolution generated by [first row, from left to right]
			the pseudo-sinusoids ($-1<k<0$), the right \textit{lintearia} ($k=0$), the elastic curves $e_k$ with $0<k<k_1$,
		[second row] the pseudolemniscate ($k=k_1:=0.65222\dots$),
	[third row, from left to right] the elastic curves $e_k$ with $k_1<k<1$, the convict curve ($k=1$), the pseudotrochoids ($k>1$).}
		\label{fig:Elasticoids}
	\end{center}
\end{figure}

Using \eqref{eq:prin curv} and \eqref{eq:K elasticoids}, it is easy to check that
the elasticoid with null modulus, i.e.\ $k=0$, satisfies $k_{\text m}=2 k_{\text p}$ and so the elasticoid generated by the lintearia is nothing but the Mylar balloon (see above). Otherwise, in \cite[Theorem 3]{CC22} we provided the following uniqueness result for this family of rotational surfaces:
{\em The only rotational surfaces verifying $k_{\text m}^2-2 k_{\text p}\, k_{\text m} + \mu=0$, $\mu \neq 0$, are the sphere of radius $R=1/\mu$ ($\mu >0$) and the elasticoids with nonzero modulus.}


\section{Prescribing principal curvatures on a rotational surface}\label{Sect3}

In this section we aim to address the problem of whether a rotational surface is (uniquely) determined if the principal curvatures are prescribed for this kind of surfaces.

\subsection{On parallels}

Our first result gives an affirmative answer when the principal curvature on parallels is prescribed in terms of a function that depends on the distance from the surface to the axis of revolution. It is a direct consequence of Corollary \ref{cor:keysurfaces} and \eqref{eq:prin curv}, bearing also in mind that the geometric linear momentum takes values in $[-1,1]$.
\begin{theorem}
	\label{th:Prescribe_kp}
	Let $p=p(x)$, $x\in D \subseteq \R $, be a continuous function such that $-1 \leq x\, p(x) \leq 1$.
	Then there exists only one rotational surface whose principal curvature on parallels is $p(x)$, 
	$x$ being the (non constant) distance from the surface to the axis of revolution.
	The surface is uniquely determined, up to translations along $z$-axis, by the geometric linear momentum of its generatrix curve given by
	\begin{equation}\label{eq:mom-kp}
		 \mathcal K (x) =x \, p(x).
	\end{equation}
\end{theorem}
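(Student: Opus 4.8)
The plan is to read off the result almost directly from the structural correspondence already established in Corollary \ref{cor:keysurfaces} together with the second formula in \eqref{eq:prin curv}. First I would recall that, by \eqref{eq:prin curv}, the principal curvature on parallels of a rotational surface $S_\alpha$ with generatrix $\alpha=(x,z)$ satisfies $k_{\text p}=\mathcal K(x)/x$. Therefore prescribing $k_{\text p}(x)=p(x)$ is \emph{equivalent} to the algebraic (not differential) relation $\mathcal K(x)=x\,p(x)$; there is no integration to perform and no free constant appears, which is precisely why the answer is unique rather than a one-parameter family.

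Next I would check that the candidate momentum $\mathcal K(x)=x\,p(x)$ is admissible, i.e.\ that it genuinely defines a rotational surface via the reconstruction in \eqref{eq:s(x)}--\eqref{eq:truco}. This is where the hypothesis $-1\le x\,p(x)\le 1$ enters: it guarantees $|\mathcal K(x)|\le 1$, so $\mathcal K$ takes values in $[-1,1]$ as required of a geometric linear momentum, and on the open set where $-1<\mathcal K(x)<1$ the quadratures \eqref{eq:s(x)} and \eqref{eq:truco} are well defined and produce a regular generatrix curve. Continuity of $p$ gives continuity of $\mathcal K$ and hence the needed regularity of the integrands. I would also note that $x$ is assumed non-constant, so Corollary \ref{cor:keysurfaces} applies (the excluded case, the right circular cylinder of Remark \ref{re:cylinder}, is exactly $x$ constant).

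Then the existence and uniqueness both follow formally: existence because the curve $\alpha$ reconstructed from $\mathcal K(x)=x\,p(x)$ generates a surface $S_\alpha$ whose parallel principal curvature is $\mathcal K(x)/x=p(x)$ by \eqref{eq:prin curv}; uniqueness (up to $z$-translations) because any rotational surface with $k_{\text p}(x)=p(x)$ must have geometric linear momentum $\mathcal K(x)=x\,p(x)$, and by Corollary \ref{cor:keysurfaces} the surface is determined up to $z$-translation by this momentum. That is the entire argument.

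I do not expect a genuine obstacle here; the statement is essentially a corollary of the machinery set up in Section \ref{Sect2}. The only point requiring a word of care is the distinction between the closed constraint $-1\le x\,p(x)\le 1$ (needed so that $\mathcal K$ is a legitimate momentum everywhere on $D$) and the open condition $-1<\mathcal K(x)<1$ (needed so that the quadrature \eqref{eq:s(x)} converges and the surface is regular); at boundary points where $|x\,p(x)|=1$ one gets, as usual, points where the generatrix is tangent to the direction of the axis, exactly as in the sphere or Mylar balloon examples that follow. Beyond flagging that, the proof is a two-line invocation of \eqref{eq:prin curv} and Corollary \ref{cor:keysurfaces}.
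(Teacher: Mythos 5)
Your argument is correct and coincides with the paper's own treatment: the authors state the theorem as ``a direct consequence of Corollary \ref{cor:keysurfaces} and \eqref{eq:prin curv}, bearing also in mind that the geometric linear momentum takes values in $[-1,1]$,'' which is exactly the two-step reasoning you give (the algebraic relation $\mathcal K(x)=x\,p(x)$ with no integration constant, hence uniqueness, plus the admissibility check from the hypothesis $-1\le x\,p(x)\le 1$). Your additional remark distinguishing the closed constraint from the open condition needed for the quadrature \eqref{eq:s(x)} is a sensible refinement but does not change the route.
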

Applying Theorem \ref{th:Prescribe_kp}, looking at \eqref{eq:mom-kp} and Section \ref{Sect2}, we can easily conclude the following simple characterizations of standard surfaces of revolution: if we prescribe $p\equiv 0$, we recover the plane; setting $p \equiv p_0$, $p_0>0$, a priori, we detect the sphere of radius $1/p_0$ and, if we admit $x$ to be constant, the right circular cylinder of the same radius (see Remark \ref{re:cylinder}); prefixing $p(x)=c/x$, $|c|<1$, we retrieve the cone with opening $\theta_0 =\arcsin c$. We emphasize the next new characterization of the Hopf-Kühnel surfaces defined in Definition \ref{def:Kuhnel}, coming from Theorem \ref{th:Prescribe_kp} and \eqref{eq:K Kuhnel}.
\begin{corollary}\label{cor:Hopf Kuhnel}
	Let $p(x)=c\, x^m$, $c>0$, $m\neq -1$, $0<x^{m+1} \leq 1/c$. The only rotational surface whose principal curvature on parallels is $p(x)$, $x$ being the (non constant) distance from the surface to the axis of revolution, is the Hopf-Kühnel surface $S_{\alpha_{q}}$, with $q=m+1$. 
	
	In particular, the catenoid (resp.\ the Mylar balloon) is the unique surface of revolution with $p(x)=c/x^2$, $x \geq c>0$ (resp.\ with $p(x)=c\,x$, $0<x^2\leq1/c$).
\end{corollary}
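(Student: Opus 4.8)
The plan is to deduce this corollary as an immediate specialization of Theorem \ref{th:Prescribe_kp}, combined with the description \eqref{eq:K Kuhnel} of the geometric linear momentum of the Hopf-Kühnel generatrices.

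First I would verify the admissibility hypothesis of Theorem \ref{th:Prescribe_kp} for $p(x)=c\,x^m$. This function is continuous on its domain, and the required bound $-1\le x\,p(x)\le 1$ reads $-1\le c\,x^{m+1}\le 1$; since $c>0$ and $0<x^{m+1}\le 1/c$ by hypothesis, in fact $0<x\,p(x)\le 1$, so the hypothesis holds. Theorem \ref{th:Prescribe_kp} then yields a unique rotational surface realizing $k_{\text p}=p(x)$, whose generatrix has geometric linear momentum
\begin{equation*}
\mathcal K(x)=x\,p(x)=c\,x^{m+1}.
\end{equation*}

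Next I would identify this momentum with \eqref{eq:K Kuhnel}. Setting $q:=m+1\neq 0$ (this is nonzero since $m\neq -1$) and $a:=c^{-1/q}>0$, we get $\mathcal K(x)=c\,x^{q}=x^{q}/a^{q}$, which is precisely the geometric linear momentum of the generatrix $\alpha_q$ of Definition \ref{def:Kuhnel}. By the uniqueness in Corollary \ref{cor:keysurfaces}, the surface must be the Hopf-Kühnel surface $S_{\alpha_q}$ with $q=m+1$.

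Finally, I would record the two distinguished cases by plugging in the exponent. For $p(x)=c/x^2$ we have $m=-2$, hence $q=-1$, which gives the catenoid; the domain condition $0<x^{m+1}=1/x\le 1/c$ rewrites as $x\ge c>0$. For $p(x)=c\,x$ we have $m=1$, hence $q=2$, which gives the Mylar balloon, and the domain condition becomes $0<x^2\le 1/c$. I do not expect any genuine obstacle in this argument: it is a direct substitution into results already established, and the only point demanding a little care is keeping track of the exponent constraint so that $\mathcal K(x)$ really stays in $[-1,1]$, which is exactly what the hypothesis $0<x^{m+1}\le 1/c$ guarantees.
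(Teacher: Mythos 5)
Your argument is correct and is exactly the route the paper intends: the corollary is presented as an immediate consequence of Theorem \ref{th:Prescribe_kp} together with the identification $\mathcal K(x)=x\,p(x)=c\,x^{m+1}=x^{q}/a^{q}$ from \eqref{eq:K Kuhnel}, with $q=m+1$ and $a=c^{-1/q}$. Your verification of the bound $0<x\,p(x)\le 1$ and the two specializations ($m=-2$ for the catenoid, $m=1$ for the Mylar balloon) match the paper's reading of the domain conditions.
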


\subsection{On meridians}

As a new direct consequence of Corollary \ref{cor:keysurfaces} and \eqref{eq:prin curv}, we deduce that if we prescribe the principal curvature on meridians in terms of a function that depends on the distance from the surface to the axis of revolution, we obtain this time a one-parameter family of rotational surfaces. 

\begin{theorem}
	\label{th:Prescribe_km}
	Let $k=k(x)$, $x\in D \subseteq \R $, be a continuous function.
	Then there exists a one-parameter family of rotational surfaces whose principal curvature on meridians is $k(x)$, 
	$x$ being the (non constant) distance from the surface to the axis of revolution.
	The surfaces in the family are uniquely determined, up to translations along $z$-axis, by the geometric linear momenta of their generatrix curves given by
	\begin{equation}\label{eq:mom-km}
		\mathcal K (x) =  \int \!   k(x) dx.
	\end{equation}
\end{theorem}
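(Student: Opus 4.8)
The plan is to mirror the proof of Theorem~\ref{Th:Kuhnel}, using \eqref{eq:prin curv} to turn the prescription $k_{\text m}(x)=k(x)$ into an elementary differential equation for the geometric linear momentum $\mathcal K=\mathcal K(x)$ and then invoke Corollary~\ref{cor:keysurfaces}. First I would recall from \eqref{eq:prin curv} that the principal curvature along meridians of $S_\alpha$ equals $\mathcal K'(x)$. Hence requiring that this curvature be the prescribed function $k(x)$ amounts exactly to the first-order o.d.e. $\mathcal K'(x)=k(x)$, whose general solution is $\mathcal K(x)=\int k(x)\,dx$, where the indefinite integral carries an arbitrary additive constant of integration. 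This already exhibits the claimed one-parameter family at the level of momenta; the parameter is precisely that constant.

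Next I would check that each such $\mathcal K$ genuinely determines a rotational surface, i.e.\ that the reconstruction algorithm \eqref{eq:s(x)}--\eqref{eq:truco} applies. The only constraint is that $\mathcal K$ must take values in $[-1,1]$ (indeed in $(-1,1)$ on the relevant domain, so that \eqref{eq:s(x)} makes sense); since $k$ is continuous, $\int k(x)\,dx$ is $C^1$, and on the subdomain where $-1<\mathcal K(x)<1$ the curve $\alpha=(x,z)$ is recovered by quadratures via \eqref{eq:truco}, with curvature $\kappa(x)=\mathcal K'(x)=k(x)$ by Corollary~\ref{cor:key}. Then $S_\alpha$ is a rotational surface whose meridian principal curvature is, by \eqref{eq:prin curv}, exactly $k(x)$, as desired. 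Conversely, if a rotational surface $S_\alpha$ (with $x$ non-constant) has $k_{\text m}(x)=k(x)$, then \eqref{eq:prin curv} forces $\mathcal K'(x)=k(x)$, so $\mathcal K$ is one of the antiderivatives in \eqref{eq:mom-km}; by Corollary~\ref{cor:keysurfaces} the surface is then determined (up to $z$-translations) by $\mathcal K$, which pins it down within the one-parameter family.

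There is essentially no hard step here: the statement is, by design, a near-immediate corollary of the momentum formalism already set up, and the proof is a two-line integration plus a citation of Corollary~\ref{cor:keysurfaces}. The one point deserving a word of care is the domain issue—distinct choices of the integration constant shift $\mathcal K$ vertically and hence change the subset of $D$ on which $|\mathcal K(x)|<1$, so different members of the family may be defined over different $x$-intervals; one should phrase the conclusion as "on the domain where the momentum stays in $(-1,1)$" rather than claim a surface for every value of the constant. I would note this parenthetically, as the authors do in analogous statements, and otherwise keep the proof to the short computation $\mathcal K'(x)=k(x)\Rightarrow \mathcal K(x)=\int k(x)\,dx$ followed by the appeal to Corollary~\ref{cor:keysurfaces} and \eqref{eq:prin curv}.
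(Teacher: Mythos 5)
Your proposal is correct and matches the paper's argument, which presents Theorem~\ref{th:Prescribe_km} as a direct consequence of \eqref{eq:prin curv} (giving $\mathcal K'(x)=k(x)$, hence $\mathcal K(x)=\int k(x)\,dx$ with the integration constant as the parameter) together with Corollary~\ref{cor:keysurfaces} for uniqueness. Your additional remark about restricting to the subdomain where $|\mathcal K(x)|<1$ is a sensible precision consistent with how the paper handles the reconstruction via \eqref{eq:s(x)}--\eqref{eq:truco}.
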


The parameter in the uniparametric family described in Theorem \ref{th:Prescribe_km} comes from the integration constant in \eqref{eq:mom-km}.

Using Theorem \ref{th:Prescribe_km}, taking into account \eqref{eq:mom-km} and Section \ref{Sect2}, we can easily get simple characterizations of the following surfaces of revolution: if we prescribe $k\equiv 0$, we recover the circular cones -including the plane as limit case- and, if we admit $x$ to be constant, the right circular cylinder (see Remark \ref{re:cylinder}); setting $k \equiv k_0$, $k_0>0$, we detect the sphere of radius $1/k_0$ and the tori of revolution with the same minor radius. Finally, using again Theorem \ref{th:Prescribe_km} and \eqref{eq:K elasticoids}, we point out the following new characterization of the elasticoids defined in Definition \ref{def:elasticoids}.

\begin{corollary}\label{cor:Eleaticoids}
	The elasticoids are the only rotational surfaces whose principal curvature on meridians is given by $k(x)=2a x$, $a >0$, $x$ being the (non constant) distance from the surface to the axis of revolution.
\end{corollary}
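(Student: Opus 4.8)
The plan is to recognize the statement as the instance $k(x) = 2ax$ of Theorem~\ref{th:Prescribe_km} and to identify the resulting one-parameter family of geometric linear momenta with the family \eqref{eq:K elasticoids} that defines the elasticoids.

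First I would apply Theorem~\ref{th:Prescribe_km} with the continuous prescribed function $k(x) = 2a x$, $a > 0$. By \eqref{eq:mom-km}, the rotational surfaces with $k_{\text m}(x) = 2a x$ are exactly those whose generatrix curve has geometric linear momentum
\[
\mathcal K(x) = \int 2a x\, dx = a x^2 + c ,
\]
with $c \in \R$ the integration constant, and by Corollary~\ref{cor:keysurfaces} each such $\mathcal K$ determines its surface uniquely up to translations along the $z$-axis (note that no right circular cylinder can occur here, since $k_{\text m} = 2ax$ cannot vanish identically along a constant-distance generatrix when $a>0$).

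Next I would rename the constant by putting $c = -k$, so $\mathcal K(x) = a x^2 - k$, and check admissibility: for the generatrix to be a genuine arc-length curve via \eqref{eq:s(x)} one needs $\mathcal K(x)^2 < 1$ on a nonempty $x$-interval. Since $a > 0$, the minimum of $x \mapsto a x^2 - k$ is $-k$, attained at $x = 0$, so $-1 < a x^2 - k < 1$ has solutions precisely when $-k < 1$, i.e. $k > -1$; this is exactly the range appearing in \eqref{eq:K elasticoids}. Hence the admissible momenta are $\mathcal K(x) = a x^2 - k$ with $k > -1$, which by Definition~\ref{def:elasticoids} and \eqref{eq:K elasticoids} are precisely the momenta of the elastic curves $e_k$ (curves with $\kappa(x) = \mathcal K'(x) = 2a x$); so by Corollary~\ref{cor:keysurfaces} the surfaces obtained are exactly the elasticoids $S_{e_k}$. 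The converse inclusion is immediate from \eqref{eq:prin curv}: every elasticoid $S_{e_k}$ satisfies $k_{\text m}(x) = \mathcal K'(x) = 2a x$.

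I do not anticipate any genuine difficulty; the only point that needs a little care is the bookkeeping of the integration constant together with the verification that the condition $\mathcal K^2 < 1$ is satisfiable exactly when $k > -1$, which is what cuts the otherwise purely formal one-parameter family down to the elasticoid family described in Definition~\ref{def:elasticoids}.
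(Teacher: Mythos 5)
Your proposal is correct and follows essentially the same route as the paper: the paper also obtains this corollary by applying Theorem~\ref{th:Prescribe_km} with $k(x)=2ax$ and matching the resulting momenta $\mathcal K(x)=ax^2+c$ with the family \eqref{eq:K elasticoids}. Your explicit verification that the admissibility condition $\mathcal K(x)^2<1$ forces $k>-1$ is a welcome extra detail that the paper leaves implicit.
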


We finally plan to study the rotational surfaces whose principal curvature on meridians is given by $k(x)=a\, e^x$, $a >0$, $x$ being the (non constant) distance from the surface to the axis of revolution. According to Theorem \ref{th:Prescribe_km}, we are devoted to study their generatrix curves, uniquely determined up to $z$-translations by the following geometric linear momenta:
\begin{equation}\label{eq:K exp}
	\mathcal K (x)=a \, e^x+c, \ a>0, \,  c<1.
\end{equation}
For this purpose, we are going to use the algorithm described by \eqref{eq:s(x)} and \eqref{eq:z(s)} in Section \ref{Sect2}. In this way, we have that
\begin{equation}\label{eq:s(t)}
	s=s(x)=\int \frac{dx}{\sqrt{1-(a \, e^x+c)^2}}=-\int \frac{dt}{\sqrt{P_a^c(t)}}, 
\end{equation}
where $ P_a^c(t)=(1-c^2)\, t^2 -2ac\, t-a^2 $ is the quadratic polynomial resulting from the change of variable  $t=e^{-x}>0$ in \eqref{eq:s(t)}. Its discriminant is $4a^2$. We distinguish the following cases taking into account that $ P_a^c(t)>0$:
\begin{itemize}
	\item Case $c=\sin \beta \in (-1,1)$, $\beta \in (-\pi/2,\pi/2)$:
	
	Then $P_a^\beta(t)=c_\beta^2 \, t^2-2a s_\beta \, t -a^2$, $t > \frac{a}{1-s_\beta}$, where $s_\beta:= \sin \beta$ and $c_\beta := \cos \beta$. After a suitable translation in $s$ if necessary, a long straightforward computation from \eqref{eq:s(t)} leads to $t=a(\cosh(c_\beta s)+s_\beta)/c_\beta^2=e^{-x}$ and so
	\begin{equation}\label{eq:x beta}
		x_a^\beta(s)=\ln \left( \frac{c_\beta^2}{a(\cosh(c_\beta s)+s_\beta)} \right), \ s \in \R.
	\end{equation}
	 Now \eqref{eq:z(s)} says that $z_a^\beta(s)=\int (a e^{x_a^\beta (s)}+s_\beta)ds$ and, using \eqref{eq:x beta}, we conclude that
	 	\begin{equation}\label{eq:z beta}
	 	z_a^\beta(s)=s_\beta \, s + 2 \arctan \left( \frac{e^{c_\beta s}+s_\beta}{c_\beta} \right), \ s \in \R.
	 \end{equation}
 Let $\alpha_a^\beta=(x_a^\beta,z_a^\beta)$ and recall that the corresponding rotational surfaces are denoted by  $S_{\alpha_a^\beta}$, $a>0$, $\beta \in (-\pi/2,\pi/2)$. The particular case $\beta =0$ recovers the curve 
	 $$\alpha_a^0 (s)=\left(\ln\left(\frac{\sech s}{a}\right),2\arctan e^s\right),$$
	 whose intrinsic equation is given by $k(s)=a\, e^{x_a^0(s)}= \sech s$. It is simply the graph $x=\ln (\frac1a \sin z)$, $z>0$, and is known in literature as the \textit{catenary of equal strength}, since it is the shape taken by an inextensible flexible massive wire hanging from two points, when the linear mass density (i.e., in practical terms, the width of the wire) is proportional to the tension (see \cite{F93}). This is the reason why we will call \textit{generalized catenoids of equal strength} the rotational surfaces $S_{\alpha_a^\beta}$, $a>0$, $\beta \in (-\pi/2,\pi/2)$. See Figure \ref{fig:SuperExpLess1}.
	 \begin{figure}[h!]
	 	\begin{center}
	 		\includegraphics[height=9cm]{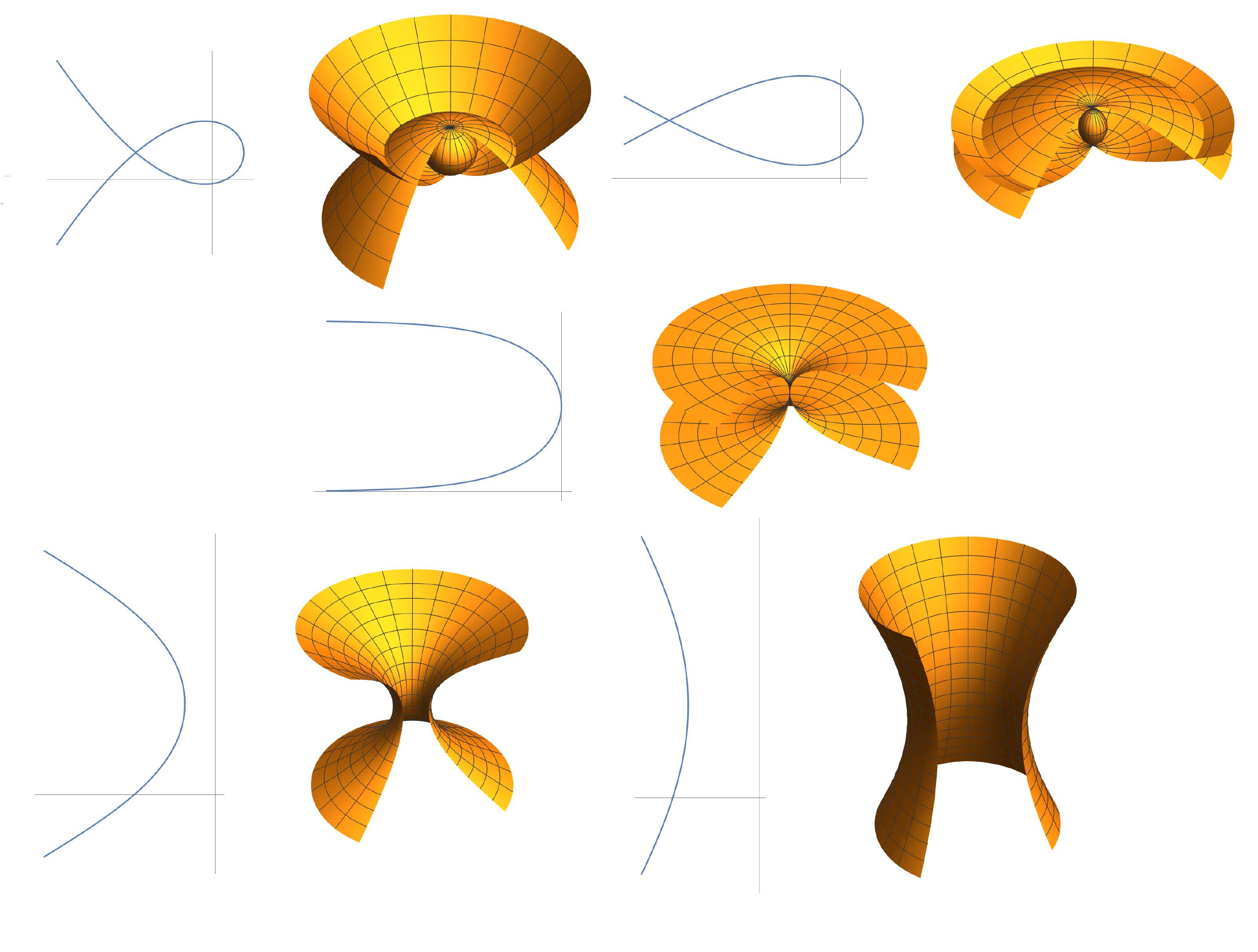}
	 		\caption{Open views of the generalized catenoids of equal strength $S_{\alpha_a^\beta}$, $a=1$, $\beta \in (-\pi/2,\pi/2)$:
	 				 [first row]
	 			with $\beta=-\pi/3$ and  $\beta=-\pi/6$,
	 			[second row] the catenoid of equal strength ($\beta =0$),
	 			[third row] with $\beta=\pi/6$ and $\beta=\pi/3$.}
	 		\label{fig:SuperExpLess1}
	 	\end{center}
	 \end{figure}
	  
	\item Case $c=-1$:
	
	Now $P_a^{-1}=a(2t-a)$, $t>a/2$. Then \eqref{eq:s(t)} implies easily that $t=a(1+s^2)/2=e^{-x}$. Thus:
	\begin{equation}\label{eq:x -1}
		x_a^{-1}(s)=\ln \left( \frac{2}{a(1+s^2)} \right), \ s \in \R,
	\end{equation}
	and \eqref{eq:z(s)} gives that $z_a^{-1}(s)=\int (a e^{x_a^{-1} (s)}-1)ds$. Using \eqref{eq:x -1}, we arrive at
	\begin{equation}\label{eq:z -1}
		z_a^{-1}(s)=2 \arctan s -s, \ s \in \R.
	\end{equation}
The intrinsic equation of the curve $\alpha_a^{-1}=(x_a^{-1},z_a^{-1})$ is given by $k(s)=a\, e^{x_a^{-1}(s)}= 2/(1+s^2)$. This curve was studied studied by Cesàro in 1886 and is just the \textit{alysoid}, from Greek allusion "little chain" (cf.\ \cite{F93}); we will call the \textit{ondualysoid} the corresponding rotational surface $S_{\alpha_a^{-1}}$. See Figure \ref{fig:Alysoid}
	\begin{figure}[h!]
		\begin{center}
			\includegraphics[height=4cm]{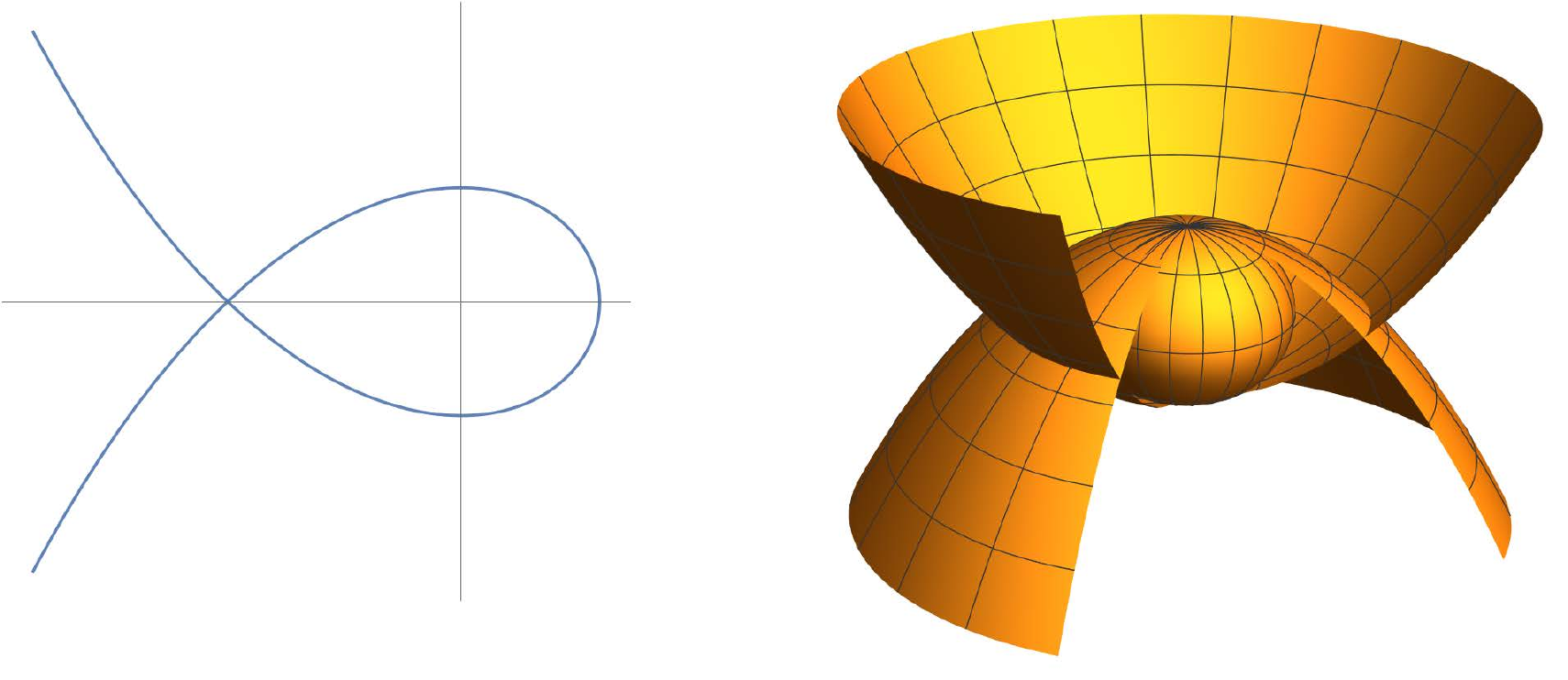}
			\caption{The alysoid and an open view of the ondualysoid $S_{\alpha_a^{-1}}$.}
			\label{fig:Alysoid}
		\end{center}
	\end{figure}
		
	\item Case $c=-\cosh \eta < -1$, $\eta >0$:
	
	Thus $P_a^\eta(t)=-sh_\eta^2 \, t^2 +2a ch_\eta \, t -a^2$, $\frac{a}{ch_\eta +1}<t<\frac{a}{ch_\eta -1}$, with $ch_\eta:=\cosh \eta$ and $sh_\eta := \sinh \eta$. A straightforward computation from \eqref{eq:s(t)} leads to $t=a(ch_\eta-\sin(sh_\eta s))/sh_\eta^2=e^{-x}$ and so
	\begin{equation}\label{eq:x eta}
		x_a^\eta(s)=\ln \left( \frac{sh_\eta^2}{a(ch_\eta-\sin(sh_\eta s))} \right), \ s \in \R. 
	\end{equation}
	Now \eqref{eq:z(s)} implies that $z_a^\eta(s)=\int (a e^{x_a^\eta (s)}-ch_\eta)ds$ and, using \eqref{eq:x eta}, we conclude that
	\begin{equation}\label{eq:z eta}
		z_a^\eta(s)=-ch_\eta \, s - 2 \arctan \left( \dfrac{1-ch_\eta \tan \left( \frac{sh_\eta s}{2} \right)}{sh_\eta} \right), \ s \in \R .
	\end{equation}
	Let $\alpha_a^\eta=(x_a^\eta,z_a^\eta)$. Its picture looks like loops along the vertical direction. We denote the corresponding rotational surfaces by  $S_{\alpha_a^\eta}$, $a>0$, $\eta >0$, and they will be called \textit{loopoids}. See Figure \ref{fig:SuperExpMoreNeg1}
\begin{figure}[h!]
	\begin{center}
		\includegraphics[height=4.2cm]{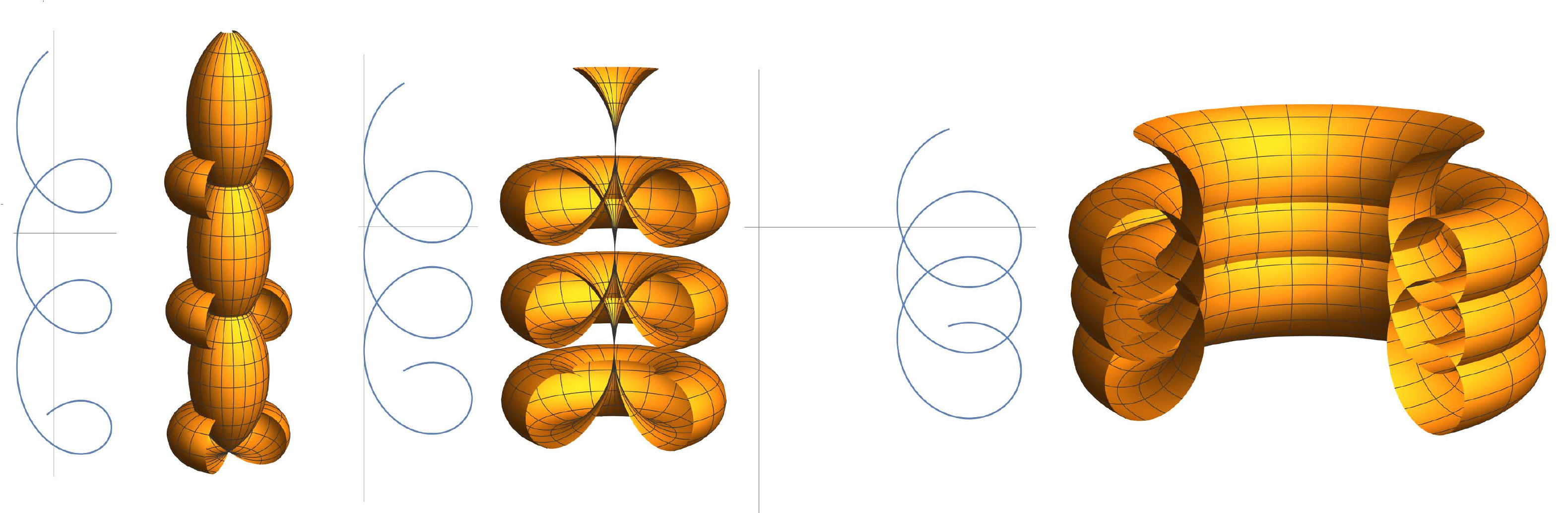}
		\caption{Open views of the loopoids $S_{\alpha_a^\eta}$, $a>0$, $\eta >0$; from left to right:
			$\cosh \eta < a+1$, $\cosh \eta = a+1$ and $\cosh \eta > a+1$ .}
		\label{fig:SuperExpMoreNeg1}
	\end{center}
\end{figure}
\end{itemize}

As a summary:
\begin{theorem}\label{Th:exp}
	The only rotational surfaces whose principal curvature on meridians is given by $k(x)=a\, e^x$, $a >0$, $x$ being the distance from the surface to the axis of revolution, are the following:
	\begin{enumerate}
		\item[(i)] the generalized catenoids of equal strength $S_{\alpha_a^\beta}$, $a>0$, $\beta \in (-\frac{\pi}{2},\frac{\pi}{2})$, see \eqref{eq:x beta} and \eqref{eq:z beta};
		\item[(ii)] the ondualysoid $S_{\alpha_a^{-1}}$, see \eqref{eq:x -1} and \eqref{eq:z -1};
		\item[(iii)] the loopoids $S_{\alpha_a^\eta}$, $a>0$, $\eta >0$, see \eqref{eq:x eta} and \eqref{eq:z eta}.
	\end{enumerate}
\end{theorem}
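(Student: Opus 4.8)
The plan is to combine Theorem~\ref{th:Prescribe_km} with the explicit reconstruction algorithm \eqref{eq:s(x)}--\eqref{eq:z(s)}, and then split into three cases according to the value of an integration constant. By Theorem~\ref{th:Prescribe_km}, a rotational surface satisfies $k_{\text m}(x)=a\,e^x$ exactly when the geometric linear momentum of its generatrix is $\mathcal K(x)=\int a\,e^x\,dx=a\,e^x+c$ for some constant $c$, and by Corollary~\ref{cor:keysurfaces} each admissible $c$ determines a unique surface up to $z$-translation. Since $\mathcal K$ must take values in $[-1,1]$ and $a\,e^x>0$, the inequality $\mathcal K(x)<1$ forces the domain of $x$ to be bounded above and forces $c<1$; for $c\ge -1$ the lower bound $\mathcal K(x)>-1$ is then automatic, while for $c<-1$ it further restricts $x$ to a bounded interval. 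Thus the admissible constants are precisely $c<1$, as recorded in \eqref{eq:K exp}, and the theorem reduces to integrating the algorithm for each such $c$.

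Next I would carry out the reduction displayed in \eqref{eq:s(t)}: the substitution $t=e^{-x}>0$ converts \eqref{eq:s(x)} into $s=-\int dt/\sqrt{P_a^c(t)}$ with $P_a^c(t)=(1-c^2)t^2-2ac\,t-a^2$, whose discriminant equals $4a^2$ and whose roots, when $1-c^2\ne0$, are $a/(1-c)$ and $-a/(1+c)$. The sign of the leading coefficient $1-c^2$ dictates exactly the three cases appearing in the statement: $c\in(-1,1)$, $c=-1$, and $c<-1$.

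In the case $c=\sin\beta\in(-1,1)$ the polynomial $P_a^\beta(t)=c_\beta^2t^2-2as_\beta t-a^2$ opens upward and is positive for $t>a/(1-s_\beta)>0$; a primitive of $1/\sqrt{P_a^\beta}$ is an inverse hyperbolic cosine, so after a translation in $s$ one inverts to $t=a(\cosh(c_\beta s)+s_\beta)/c_\beta^2=e^{-x}$, which is \eqref{eq:x beta}, and then \eqref{eq:z(s)} gives $z_a^\beta(s)=\int(a/t+s_\beta)\,ds$, whose evaluation produces the $\arctan$-term of \eqref{eq:z beta}; the subcase $\beta=0$ is precisely the catenary of equal strength, explaining the terminology. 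In the case $c=-1$ the polynomial degenerates to the linear $a(2t-a)$ on $t>a/2$, the quadrature is elementary and yields $t=a(1+s^2)/2=e^{-x}$, i.e.\ \eqref{eq:x -1}, with \eqref{eq:z(s)} giving \eqref{eq:z -1}; the intrinsic equation $k(s)=2/(1+s^2)$ identifies the alysoid. In the case $c=-\cosh\eta<-1$ the polynomial $P_a^\eta(t)=-sh_\eta^2t^2+2a\,ch_\eta t-a^2$ opens downward and is positive on the bounded interval $a/(ch_\eta+1)<t<a/(ch_\eta-1)$; a primitive of $1/\sqrt{P_a^\eta}$ is an arcsine, inversion gives $t=a(ch_\eta-\sin(sh_\eta s))/sh_\eta^2=e^{-x}$, i.e.\ \eqref{eq:x eta}, and \eqref{eq:z(s)} yields \eqref{eq:z eta}, the loopoids. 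Conversely, checking directly that $\mathcal K(x)=a\,e^x+c$ holds for each of the curves listed, together with $k_{\text m}(x)=\mathcal K'(x)=a\,e^x$ from \eqref{eq:prin curv} and the uniqueness of Corollary~\ref{cor:keysurfaces}, shows that the three families exhaust exactly the surfaces in question.

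The main obstacle is bookkeeping rather than conceptual: one must keep track of the correct branch in each quadrature (the sign of $ds$ and the integration constant absorbed into a $z$- or $s$-translation), confirm that the stated $t$-intervals coincide with $\{P_a^c>0\}$ and with $\{-1<\mathcal K<1\}$, and—most delicately, in the loopoid case—verify that although $\mathcal K^2=1$ is attained at the isolated parameter values where $\sin(sh_\eta s)=\pm1$, the arc-length parametrization $\alpha_a^\eta=(x_a^\eta,z_a^\eta)$ extends smoothly across those points, so that the branches produced by the algorithm on the intervals where $-1<\mathcal K<1$ glue into a single smooth generatrix defined for all $s\in\R$. The explicit antiderivatives involved (the inverse hyperbolic cosine, the square root, the arcsine, and the subsequent $\int a/t\,ds$) are routine and I would not reproduce them in full.
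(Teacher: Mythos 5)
Your proposal is correct and follows essentially the same route as the paper: invoking Theorem~\ref{th:Prescribe_km} to reduce the problem to the momenta $\mathcal K(x)=a\,e^x+c$ with $c<1$, performing the substitution $t=e^{-x}$ to obtain the quadratic $P_a^c(t)$ of \eqref{eq:s(t)}, and splitting into the three cases $c\in(-1,1)$, $c=-1$, $c<-1$ according to the sign of the leading coefficient, exactly as in the derivation preceding the theorem. The only additions beyond the paper's argument are your explicit justification of the restriction $c<1$ and the remark about smooth gluing across the points where $\mathcal K^2=1$ in the loopoid case, both of which are sound.
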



\section{Prescribing mean and Gaussian curvature on a rotational surface}\label{Sect4}

\subsection{Prescribing mean curvature on a rotational surface}\label{SectHprescribed}

In this section, we intend to study the problem of prescribing the mean curvature on a rotational surface in terms of a function $H=H(x)$ that depends on the distance $x$ from the surface to the axis of revolution. Using \eqref{eq:H}, we can interpret $\mathcal K'(x)+\mathcal K(x)/x=2H(x)$ as a linear o.d.e.\ with unknown the function $\mathcal K=\mathcal K(x)$. Then, Corollary \ref{cor:keysurfaces} leads to the following result:

\begin{theorem}\cite[Theorem 4.(a)]{CC22}
	\label{th:Prescribe_H}
 Let $H=H(x)$, $x\in D \subseteq \R $, be a continuous function. 
		Then there exists a one-parameter family of rotational surfaces with mean curvature $H(x)$, 
		$x$ being the (non constant) distance from the surface to the axis of revolution.
		The surfaces in the family are uniquely determined, up to translations along $z$-axis, by the geometric linear momenta of their generatrix curves given by
		\begin{equation}\label{eq:mom-H}
			x \, \mathcal K (x) =2 \! \int \! x  H(x) dx.
		\end{equation}
\end{theorem}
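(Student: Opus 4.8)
The plan is to read the prescribed–mean–curvature condition through formula \eqref{eq:H}, which expresses $2H$ in terms of the geometric linear momentum of the generatrix as $2H(x)=\mathcal K'(x)+\mathcal K(x)/x$. Regarding $H$ as the datum and $\mathcal K=\mathcal K(x)$ as the unknown, this is a linear first-order o.d.e.\ on the intervals where $x\neq 0$. First I would multiply through by the integrating factor $x$ and recognize the left-hand side as an exact derivative, $x\mathcal K'(x)+\mathcal K(x)=\bigl(x\,\mathcal K(x)\bigr)'=2x\,H(x)$. A single integration then yields $x\,\mathcal K(x)=2\int x\,H(x)\,dx+c$, which is exactly \eqref{eq:mom-H} once the integration constant $c$ is absorbed into the indefinite integral; this constant is the source of the announced one-parameter family.

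Next I would invoke Corollary \ref{cor:keysurfaces}: since $x$ is non-constant, each admissible choice of $\mathcal K=\mathcal K(x)$ determines a unique rotational surface $S_\alpha$, up to translations along the $z$-axis, and by \eqref{eq:H} this surface has mean curvature precisely $H(x)$. Conversely, any rotational surface with $x$ non-constant and mean curvature $H(x)$ has a geometric linear momentum solving the same o.d.e., hence of the stated form. This establishes the correspondence between the solutions of \eqref{eq:mom-H} and the surfaces in the family, and the proof is then complete.

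The only points requiring a word of care are about domains, not about any genuine difficulty. The geometric linear momentum must satisfy $-1\le \mathcal K(x)\le 1$ (and strictly between the endpoints where the generatrix is a graph, so that \eqref{eq:s(x)} and \eqref{eq:z(s)} apply), so the domain $D$ on which a given member of the family is defined is the subset of $\{x\neq 0\}$ cut out by $\bigl|2\int x\,H(x)\,dx+c\bigr|\le |x|$; for some values of $c$ this set may be empty, and the ``one-parameter family'' is understood to range over those $c$ for which it is not. One should also note that $x=0$ is a singular point of the linear o.d.e., so the branches $x>0$ and $x<0$ are a priori independent, which is consistent both with the congruence $S_{(x,z)}\sim S_{(-x,z)}$ recorded earlier and with the fact that a generatrix meeting the axis orthogonally forces $\mathcal K(0)=0$. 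With these remarks in place, the argument reduces to the one-line integration above together with Corollary \ref{cor:keysurfaces}.
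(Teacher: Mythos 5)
Your proposal is correct and follows essentially the same route as the paper: the authors also read \eqref{eq:H} as the linear o.d.e.\ $\mathcal K'(x)+\mathcal K(x)/x=2H(x)$, integrate it via the factor $x$ to obtain \eqref{eq:mom-H} with the integration constant producing the one-parameter family, and conclude by Corollary \ref{cor:keysurfaces}. Your additional remarks on the admissible domain $|\mathcal K(x)|\le 1$ and the singularity at $x=0$ are sensible but not part of the paper's argument.
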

The parameter in the uniparametric family described in Theorem \ref{th:Prescribe_H} comes from the integration constant in \eqref{eq:mom-H}.

As a consequence of Theorem \ref{th:Prescribe_H}, we provided in \cite[Section 4]{CC22}  simple new proofs of some classical results concerning rotational surfaces like Euler's theorem (cf.\ \cite{E44}) about minimal ones (see \cite[Corollary 3]{CC22}) and Delaunay's theorem on constant mean curvature ones (see \cite[Corollary 4]{CC22}).  

As an illustration of new possibilities of Theorem \ref{th:Prescribe_H}, we deal with rotational surfaces such that the mean curvature is  inversely proportional to the distance to the axis of revolution. Following our notation, we can assume without restriction that  
\begin{equation}\label{eq:Hmux}
H(x)=\mu/x, \ \mu>0, \, x>0.
\end{equation}
We point out that the condition imposed on $H$ in \eqref{eq:Hmux} is quite natural in the sense that it is invariant under dilations of the surface and so it is necessary to consider all possible values of the positive constant $\mu$. Applying Theorem \ref{th:Prescribe_H}, putting \eqref{eq:Hmux} in \eqref{eq:mom-H}, we must control the generatrix curves whose geometric linear momentum is given by
\begin{equation}\label{eq:K_Hinverse}
	\mathcal K(x)=2\mu+c/x, \ \mu >0, \, c\in \R.
\end{equation}
We intend to obtain such curves as graphs $z=z_\mu^c(x)$ through \eqref{eq:truco}, getting that
\begin{equation}\label{eq:zmuc}
z_\mu^c(x)=\int \frac{(2\mu x+c)dx}{\sqrt{P_\mu^c(x)}}, \  P_\mu^c(x):=(1-4\mu^2)x^2-4\mu c\, x - c^2.
\end{equation}
The discriminant of the second degree polynomial $P_\mu^c(x)$ is $4c^2$.
We first observe from \eqref{eq:K_Hinverse} that if $c=0$, then $\mathcal K \equiv 2\mu$ and necessarily $\mu <1/2$, arriving so at the cone with opening $\theta_0 = \arcsin (2\mu)$ (see Section \ref{Sect2}). From \eqref{eq:zmuc}, we directly get that $z_\mu^0(x)=\frac{2\mu}{\sqrt{1-4\mu^2}}x=\tan \theta_0 \, x$, $x>0$, that it is just the half-straight line generatrix of the above mentioned cone. 

We assume, from now on, that $c\neq 0$. We distinguish the following cases taking into account that $P_\mu^c(x)>0$:
\begin{itemize}
	\item Case $\mu=1/2$: 
	
	Then $P_{1/2}^c(x)=-c(2x+c)$, with $c<0$ and $x>-c/2$. Using \eqref{eq:zmuc}, we deduce that
	\begin{equation}\label{eq:z05c}
		z_{1/2}^c(x)=\pm\frac{1}{3\sqrt{-c}}(x+2c)\sqrt{2x+c}.
	\end{equation}
See Figure \ref{fig:H05x surfaces}.
\begin{figure}[h!]
	\begin{center}
		\includegraphics[height=5.2cm]{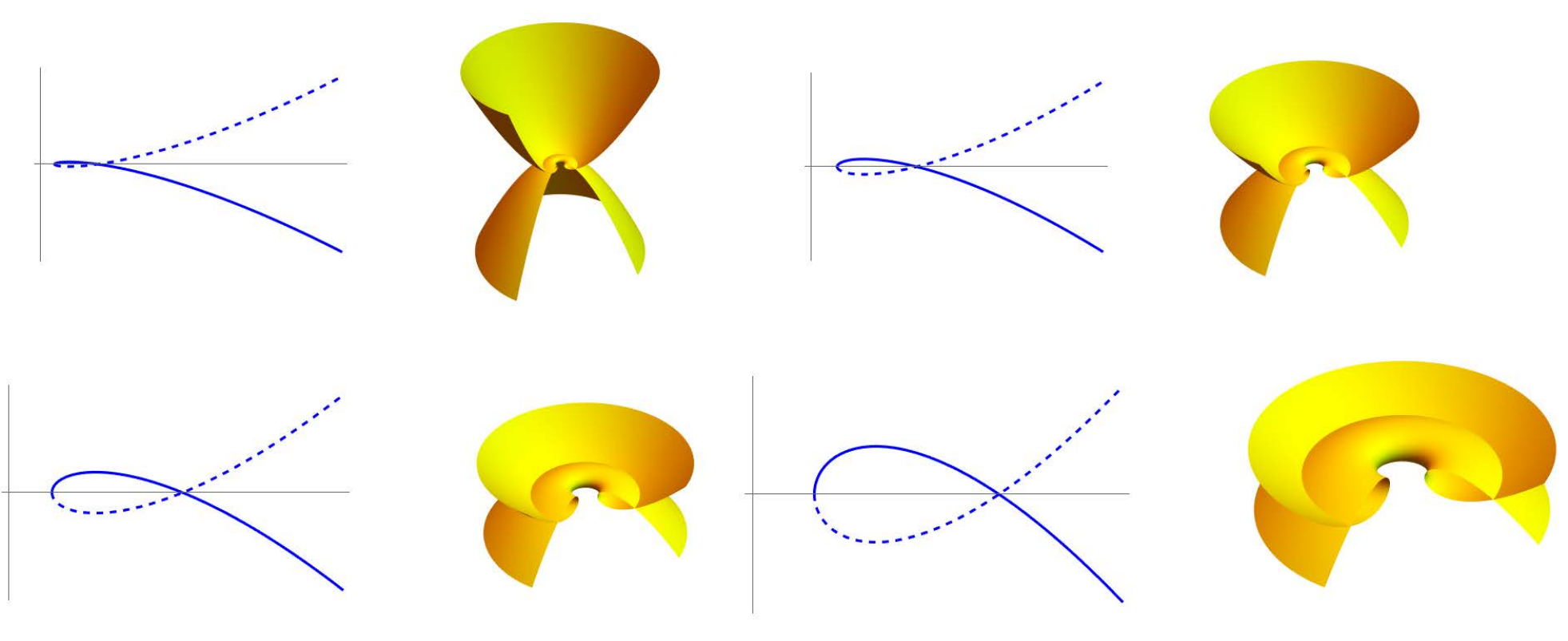}
		\caption{Open views of the rotational surfaces generated by $z=z_{1/2}^c(x)$, for different values of $c<0$.}
		\label{fig:H05x surfaces}
	\end{center}
\end{figure}
	
	\item Case $0<\mu:=\frac12 \sin \theta < 1/2$, $0<\theta<\pi/2$: 
	
	Now $P_\theta^c(x)=\cos^2 \theta \, x^2 -2 \sin \theta \ c \, x - c^2$, with $c\in \R$. If $c>0$, then $x>\frac{c}{1-\sin \theta}$; if $c<0$, then $x>\frac{-c}{1+\sin \theta}$. A straightforward long computation from \eqref{eq:zmuc} gives that
	\begin{equation}\label{eq:zthetac}
		z_\theta^c(x)=\frac{s_\theta}{c_\theta^2}\sqrt{P_\theta^c(x)}+\frac{c}{c_\theta^3} \ln \left(2 c_\theta \sqrt{P_\theta^c(x)} + 2 c_\theta ^2 x- 2 s_\theta c\right),
	\end{equation}
where $c_\theta:=\cos \theta$ and $s_\theta:=\sin \theta$. We remark that $z_\theta^0(x)=\tan \theta \, x$. See Figure \ref{fig:Hthetax surfaces}.

\begin{figure}[h!]
	\begin{center}
		\includegraphics[height=7cm]{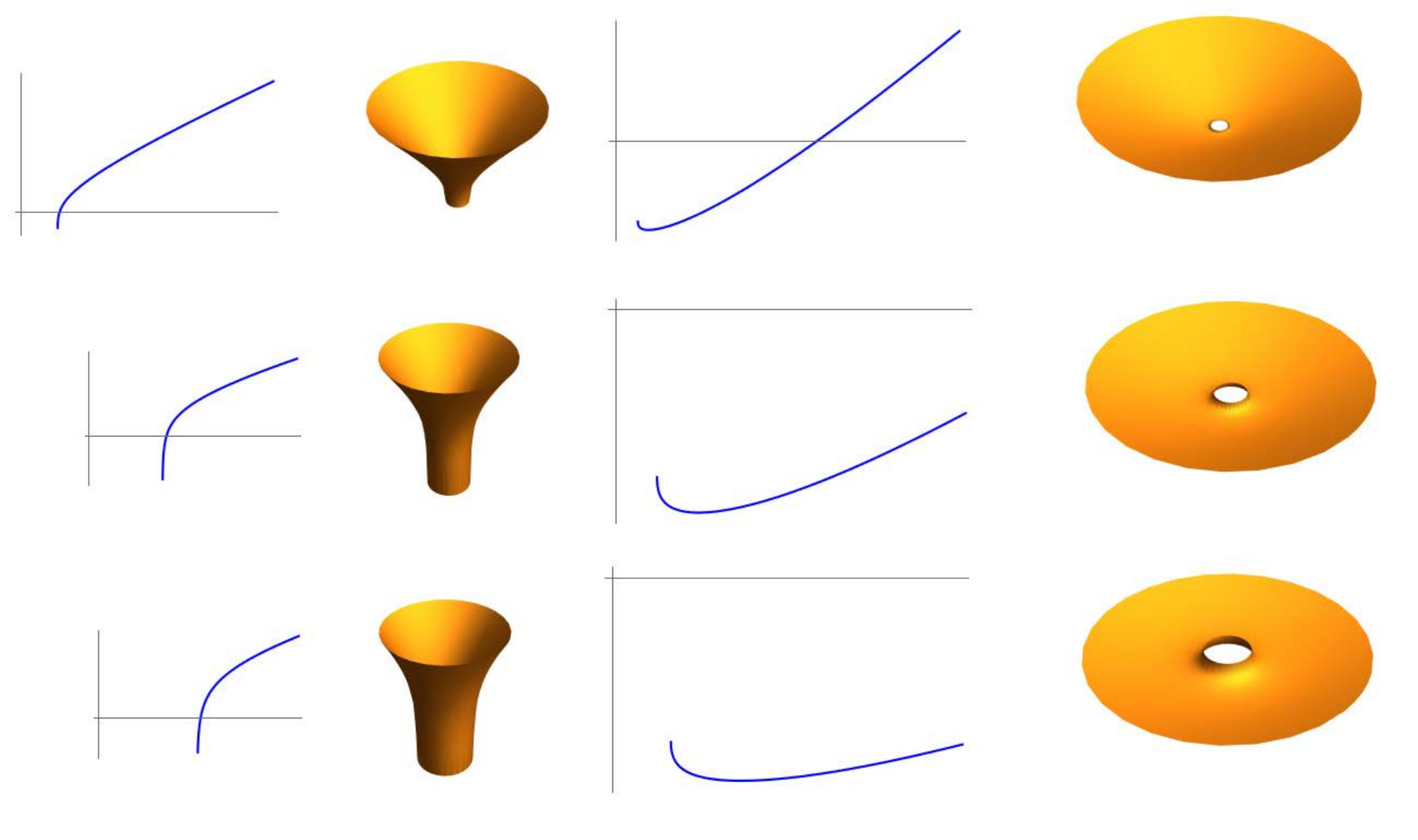}
		\caption{Rotational surfaces generated by $z=z_\theta^c(x)$, for a fixed values of $\theta$ ($\theta = \pi/4$) and different values of $c>0$ (on the left) and $c<0$ (on the right).}
		\label{fig:Hthetax surfaces}
	\end{center}
\end{figure}

	\item Case $\mu:=\frac12 \cosh \delta >1/2$, $\delta >0$: 
	
	In this case, $P_\delta^c(x)=- \sinh^2 \delta x^2 - 2 \cosh \delta \, c \, x - c^2$, with $c<0$ and $\frac{-c}{1+\cosh \delta} < x < \frac{c}{1-\cosh \delta}$. Another straightforward long computation from \eqref{eq:zmuc} leads to 
	\begin{equation}\label{eq:zdeltac}
		z_{\delta}^c(x)=-\frac{ch_\delta}{sh^2_\delta}\sqrt{P_\delta^c(x)}+\frac{c}{sh^3_\delta}\arcsin \left( \frac{sh^2 \delta x + ch_\delta c}{c}\right),
	\end{equation}
where $ch_\delta:=\cosh \delta$ and $sh_\delta:=\sinh \delta$. See Figure \ref{fig:Hdeltax surfaces}.

\begin{figure}[h!]
	\begin{center}
		\includegraphics[height=8cm]{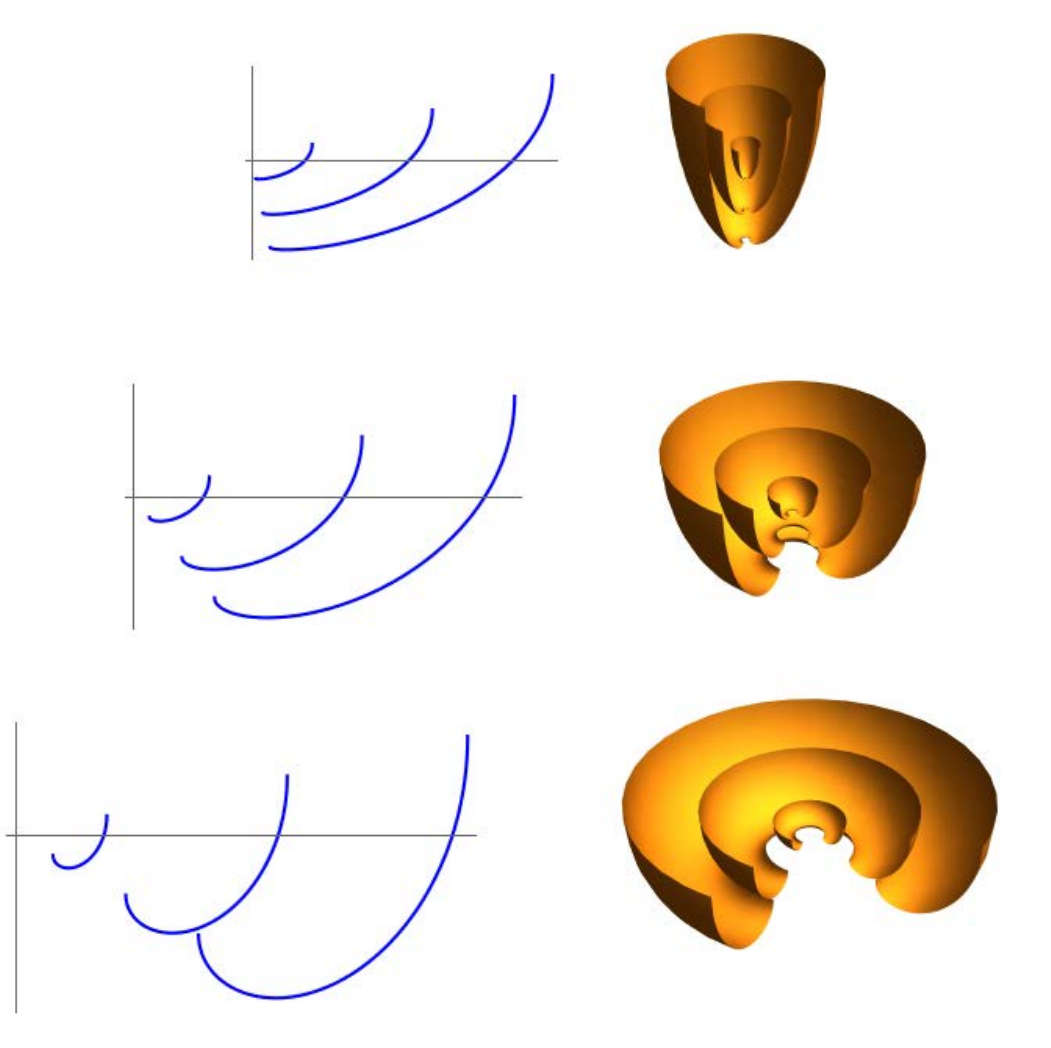}
		\caption{Open views of the rotational surfaces generated by $z=z_\delta^c(x)$, for different values of $\delta >0$ and different values of $c<0$.}
		\label{fig:Hdeltax surfaces}
	\end{center}
\end{figure}

\end{itemize}

As a summary, we collect in the next result the classification of rotational surfaces with mean curvature inversely proportional to the distance from the surface to the axis of revolution.

\begin{theorem}\label{th:Class_Hinvx}
	Let $S_z$ be a rotational surface written as
$$ S_z \equiv X(x,\vartheta)=\left(x  \cos \vartheta, x \sin \vartheta, z(x) \right), \
 x>0, \,  \vartheta \in (-\pi,\pi).$$
 If the mean curvature $H$ of $S_z$ is given by $H(x)=\mu/x$, $\mu>0$, there are three possibilities:
 \begin{itemize}
 	\item[(i)] $\mu=\frac12$ and $z=z_{1/2}^c(x)$, $x>-\frac{c}{2}$, with $c<0$, see \eqref{eq:z05c}.
 	\item[(ii)]  $0<\mu:=\frac12 \sin \theta < \frac12$, $0<\theta<\frac{\pi}{2}$ and $z=z_\theta^c(x)$, $x>\frac{c}{1-\sin \theta}$ (resp.\ $x>\frac{-c}{1+\sin \theta}$) if $c>0$ (resp.\ if $c<0$), see \eqref{eq:zthetac}; $z=z_\theta^0(x)=\tan \theta \, x$, if $c=0$ (i.e.\ the circular cone with opening $\theta$).
 	\item[(iii)] $\mu:=\frac12 \cosh \delta >\frac12$, $\delta >0$ and $z=z_{\delta}^c(x)$, $\frac{-c}{1+\cosh \delta} < x < \frac{c}{1-\cosh \delta}$,  with $c<0$, see \eqref{eq:zdeltac}.
 \end{itemize}
\end{theorem}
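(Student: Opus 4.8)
The plan is to apply Theorem~\ref{th:Prescribe_H} with the prescription \eqref{eq:Hmux} and then simply carry out the quadrature \eqref{eq:truco} in each of the regimes dictated by the sign of $1-4\mu^2$. First I would substitute $H(x)=\mu/x$ into \eqref{eq:mom-H}, obtaining $x\,\mathcal K(x)=2\mu x+c$ for an arbitrary integration constant $c$, hence the one-parameter family of momenta $\mathcal K(x)=2\mu+c/x$ recorded in \eqref{eq:K_Hinverse}. Since $\mathcal K$ must take values in $(-1,1)$ for the reconstruction algorithm of Section~\ref{Sect2} to produce a regular generatrix, this already forces $\mu<1/2$ when $c=0$, and in general constrains the domain of $x$; I would dispatch the degenerate case $c=0$ immediately, noting $\mathcal K\equiv 2\mu$ is the momentum of the circular cone of opening $\theta_0=\arcsin(2\mu)$ by the list in Section~\ref{Sect2}, and that \eqref{eq:truco} returns its rectilinear generatrix $z=\tan\theta_0\,x$.

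Next, assuming $c\neq 0$, I would feed \eqref{eq:K_Hinverse} into \eqref{eq:truco}: a direct computation gives
$$
1-\mathcal K(x)^2 = 1-\Bigl(2\mu+\frac{c}{x}\Bigr)^2 = \frac{(1-4\mu^2)x^2-4\mu c\,x-c^2}{x^2} = \frac{P_\mu^c(x)}{x^2},
$$
so that $z_\mu^c(x)=\int (2\mu x+c)\,dx/\sqrt{P_\mu^c(x)}$ exactly as in \eqref{eq:zmuc}, with $\operatorname{disc}P_\mu^c=4c^2>0$. The leading coefficient $1-4\mu^2$ of $P_\mu^c$ is positive, zero, or negative according to whether $\mu<1/2$, $\mu=1/2$, or $\mu>1/2$, and this trichotomy is precisely the three cases of the theorem. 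In each case I would reparametrize by writing $2\mu=\sin\theta$ ($0<\theta<\pi/2$), $2\mu=1$, or $2\mu=\cosh\delta$ ($\delta>0$) respectively, determine from $P_\mu^c(x)>0$ together with $x>0$ the admissible interval for $x$ (this is where the inequalities $x>c/(1-\sin\theta)$, $x>-c/(1+\sin\theta)$, $x>-c/2$, and $-c/(1+\cosh\delta)<x<c/(1-\cosh\delta)$ come from, and where one sees that $c$ must be negative except in the subcritical case with $c>0$ allowed), and then integrate. The three antiderivatives are elementary: splitting $2\mu x+c$ into a multiple of $P_\mu^c{}'(x)$ plus a constant reduces each integral to $\int P_\mu^c{}'/\sqrt{P_\mu^c}$ (yielding $\sqrt{P_\mu^c}$) plus $\int dx/\sqrt{P_\mu^c}$, the latter being a logarithm when the leading coefficient is positive (case $0<\mu<1/2$), an elementary power when it vanishes (case $\mu=1/2$), and an $\arcsin$ when it is negative (case $\mu>1/2$). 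Matching constants produces \eqref{eq:z05c}, \eqref{eq:zthetac}, \eqref{eq:zdeltac}.

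Finally, for the converse direction I would invoke Corollary~\ref{cor:keysurfaces}: any rotational surface $S_z$ with $x$ non-constant and $H(x)=\mu/x$ has a generatrix whose momentum satisfies the linear o.d.e.\ $\mathcal K'(x)+\mathcal K(x)/x=2\mu/x$, whose general solution is \eqref{eq:K_Hinverse}, so $S_z$ must coincide (up to a $z$-translation, already absorbed into the integration constant of the antiderivative) with one of the surfaces just constructed; this pins down the classification and shows the list is exhaustive. The only genuinely delicate points are bookkeeping ones: correctly reading off the domain of $x$ from the condition $P_\mu^c(x)>0$ in each regime (in particular deciding which roots of $P_\mu^c$ bound the interval and the resulting sign restrictions on $c$), and keeping track of the $\pm$ ambiguity in $\sqrt{\,\cdot\,}$ which reflects the freedom $S_{(x,z)}\sim S_{(x,-z)}$ up to reflection. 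I expect the antiderivative computations themselves to be routine; the main obstacle is organizing the case analysis so that the stated intervals and sign conditions are seen to be exactly the ones forced by regularity ($|\mathcal K|<1$) and positivity of $P_\mu^c$.
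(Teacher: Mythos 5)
Your proposal is correct and follows essentially the same route as the paper: substituting $H(x)=\mu/x$ into \eqref{eq:mom-H} to obtain the momenta \eqref{eq:K_Hinverse}, dispatching $c=0$ as the cone, reducing to the quadrature \eqref{eq:zmuc} with the polynomial $P_\mu^c$, and splitting into the three cases according to the sign of $1-4\mu^2$ with the same trigonometric/hyperbolic reparametrizations. The uniqueness via Corollary \ref{cor:keysurfaces} is exactly how the paper closes the classification.
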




\subsection{Prescribing Gaussian curvature on a rotational surface}\label{SectKGprescribed}

In this section, we aim to study the problem of prescribing the Gauss curvature on a rotational surface in terms of a function $K_G=K_G(x)$ that depends on the distance $x$ from the surface to the axis of revolution. Using \eqref{eq:KGauss}, we can interpret $\mathcal K'(x)\mathcal K(x)/x=K_G(x)$ as a trivial o.d.e.\ with unknown the function $\mathcal K=\mathcal K(x)$. Then, Corollary \ref{cor:keysurfaces} implies the following result:

\begin{theorem}\cite[Theorem 4.(b)]{CC22}
	\label{th:Prescribe_KGauss}
 Let $K_{\text{G}}=K_{\text{G}}(x)$, $x\in D \subseteq \R $, be a continuous function. 
		Then there exists a one-parameter family of rotational surfaces with Gauss curvature  $K_{\text{G}}(z)$,
		$x$ being the (non constant) distance from the surface to the axis of revolution. The surfaces in the family are uniquely determined, up to translations along $z$-axis, by the geometric linear momenta of their generatrix curves given by
		\begin{equation}\label{eq:mom-KGauss}
			\mathcal K(x)^2 =2 \! \int \! x  K_{\text G}(x) dx.
		\end{equation}
\end{theorem}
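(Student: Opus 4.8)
The plan is to translate the prescription of the Gauss curvature into an exact first order differential equation for the geometric linear momentum $\mathcal K=\mathcal K(x)$ of the generatrix, and then to invoke Corollary \ref{cor:keysurfaces}. Starting from \eqref{eq:KGauss}, which gives $K_{\text G}=\mathcal K(x)\,\mathcal K'(x)/x$ for the surface $S_\alpha$, the crucial remark is that $\mathcal K(x)\,\mathcal K'(x)=\tfrac12\big(\mathcal K(x)^2\big)'$; hence requiring $K_{\text G}=K_{\text G}(x)$ is the same as requiring $\big(\mathcal K(x)^2\big)'=2\,x\,K_{\text G}(x)$. Since $K_{\text G}$ is continuous, I would integrate this identity directly to obtain $\mathcal K(x)^2=2\int x\,K_{\text G}(x)\,dx$, which is exactly \eqref{eq:mom-KGauss}, the additive constant from the indefinite integral being the parameter of the announced family.

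Next I would check that such a $\mathcal K$ genuinely arises from a plane curve. Writing $F(x):=2\int x\,K_{\text G}(x)\,dx$, on the part of $D$ under consideration one needs $0\le F(x)\le 1$ because the geometric linear momentum takes values in $[-1,1]$; there one sets $\mathcal K(x)=\pm\sqrt{F(x)}$. The two signs correspond to the two orientations of the generatrix and yield the same unoriented surface of revolution, so the only effective freedom in the construction is the single integration constant. For each admissible value of that constant, the algorithm \eqref{eq:s(x)}--\eqref{eq:z(s)} (equivalently \eqref{eq:truco}) reconstructs a generatrix $\alpha=(x,z)$ from $\mathcal K=\mathcal K(x)$, and by \eqref{eq:KGauss} the surface $S_\alpha$ has Gauss curvature precisely $K_{\text G}(x)$; conversely, any rotational surface with this Gauss curvature has a momentum solving $\big(\mathcal K^2\big)'=2xK_{\text G}$, so it belongs to the family.

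Uniqueness modulo $z$-translations then follows immediately from Corollary \ref{cor:keysurfaces}, since a rotational surface with non-constant distance $x$ to its axis is determined by $\mathcal K=\mathcal K(x)$ alone. The whole argument runs parallel to the proof of Theorem \ref{th:Prescribe_H}, the difference being that $\mathcal K$ enters \eqref{eq:KGauss} through a product $\mathcal K\mathcal K'$ instead of linearly; consequently I expect the only delicate point to be the sign extraction together with the positivity requirement $F(x)\ge 0$, which has no analogue in the mean-curvature setting and which constrains both the admissible values of the parameter and the domain on which the resulting surfaces are defined.
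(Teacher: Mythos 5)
Your argument is correct and is essentially the paper's own: the paper likewise reads \eqref{eq:KGauss} as the trivial o.d.e.\ $\mathcal K(x)\mathcal K'(x)=x\,K_{\text G}(x)$, integrates it to \eqref{eq:mom-KGauss} with the integration constant as the parameter, and concludes by Corollary \ref{cor:keysurfaces}. Your extra remarks on the sign of $\mathcal K=\pm\sqrt{F(x)}$ and the admissibility condition $0\le F(x)\le 1$ are sensible refinements consistent with the paper's conventions, not a different route.
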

The parameter in the uniparametric family described in Theorem \ref{th:Prescribe_KGauss} comes from the integration constant in \eqref{eq:mom-KGauss}.

As a consequence of Theorem \ref{th:Prescribe_KGauss}, we provided in \cite{CC22} a simple new proof of Darboux's theorem concerning rotational surfaces with constant Gauss curvature (see \cite[Corollary 5]{CC22}).


As an illustration of some new applications of Theorem \ref{th:Prescribe_KGauss}, we deal with rotational surfaces such that the Gauss curvature is  inversely proportional to the distance to the axis of revolution. Following our notation, we can assume without restriction that 
\begin{equation}\label{eq:KGinvx}
K_G(x)=\mu/x, \ \mu >0, \, x\neq 0.
\end{equation}
Applying Theorem \ref{th:Prescribe_KGauss}, putting \eqref{eq:KGinvx} in \eqref{eq:mom-KGauss}, we must control the generatrix curves whose geometric linear momentum is given by
\begin{equation}\label{eq:K_KGinvx}
	\mathcal K(x)=\sqrt{2\mu x+c}, \,  c\in\R
\end{equation}
Now we recall (see Section \ref{Sect2}) that the onducycloid of radius $R>0$, defined as the surface generated by the rotation of the cycloid around its base, was determined by its geometric linear momentum $\mathcal K(x)=\sqrt x / \sqrt{2R}$. So the case $c=0$ in \eqref{eq:K_KGinvx} corresponds to the onducycloid of radius $R=\frac{1}{4\mu}$.

In the general case $c\neq 0$, looking at \eqref{eq:K_KGinvx}, we can make now the following simpler reasoning.
We translate in the $x$-direction the generatrix cycloid of radius $R$ of the onducycloid (corresponding to $a=0$ in the following) considering the family of cycloids $C_a^R$, $a\in \R$, given by $x=a+R(1-\cos t)$, $z=R(t-\sin t -\pi)$, $t\in [0,2\pi]$ (see Figure \ref{fig:Cycloids}.)

\begin{figure}[h!]
	\begin{center}
		\includegraphics[height=5cm]{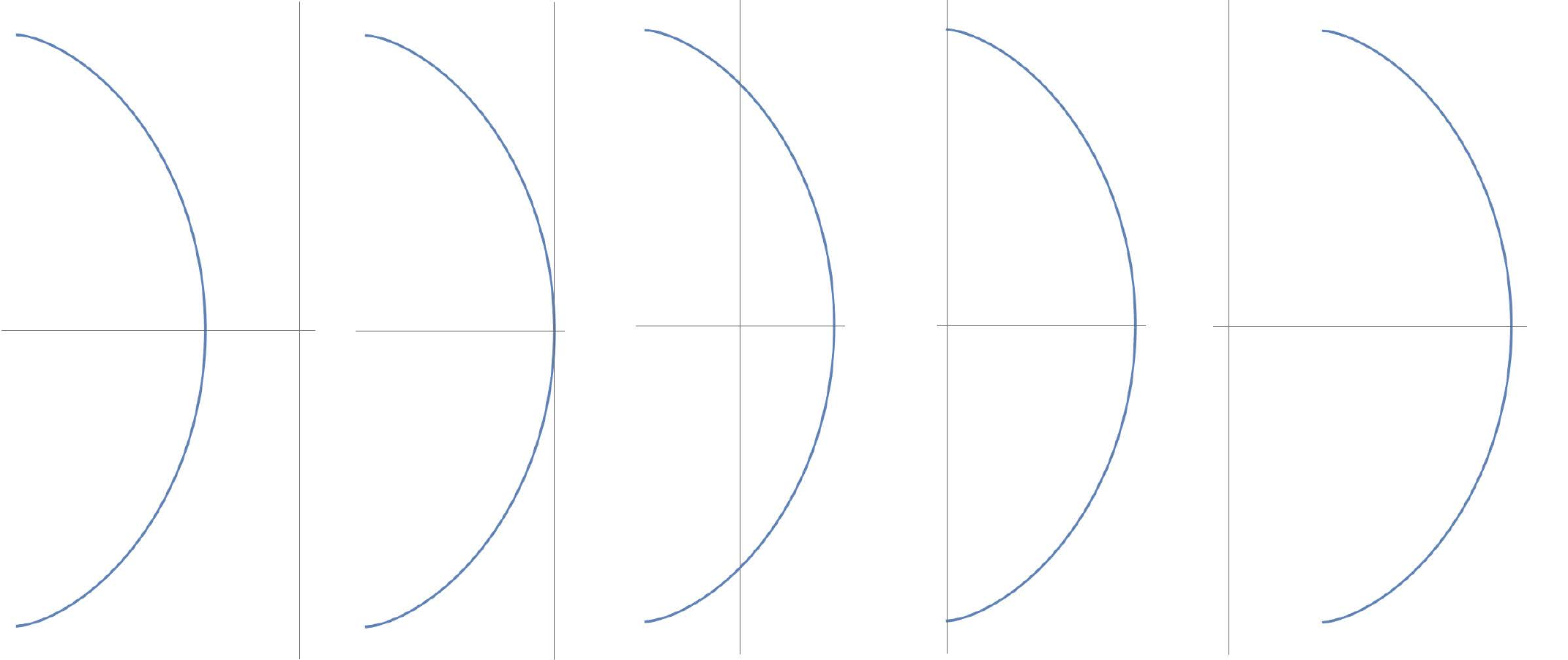}
		\caption{Cycloids $C_a^R$ with a fixed radius $R>0$ and (from left to right) $a<-2R$, $a=-2R$, $-2R<a<0$, $a=0$, $a>0$.}
		\label{fig:Cycloids}
	\end{center}
\end{figure}

Using Remark \ref{re:no ppa}, it is an exercise to check that the geometric linear momentum determining the cycloids $C_a^R$ is given by
\begin{equation}\label{eq:K_cycloids}
	\mathcal K(x)=\sqrt{\frac{x-a}{2R}}, \ x \geq a.
\end{equation}
Comparing \eqref{eq:K_KGinvx} and \eqref{eq:K_cycloids}, we easily deduce that the rotational surfaces we were looking for are nothing but the generated by the cycloids $C_a^R$, with $a=-2Rc$ and, of course, $R=\frac{1}{4\mu}$. We will call them \textit{transonducycloids} of radius $R$, since the case $a=0 \Leftrightarrow c=0$ recovers the onducycloid of radius $R$ (see Figure \ref{fig:transonducycloidsycloids}). 

\begin{figure}[h!]
	\begin{center}
		\includegraphics[height=4cm]{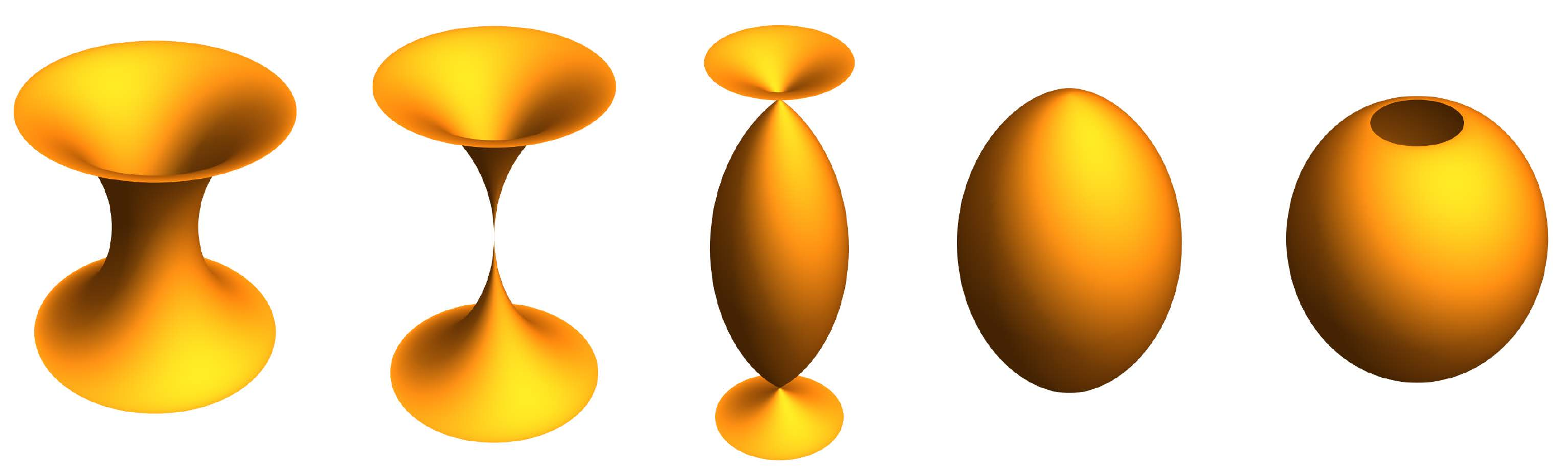}
		\caption{Transonducycloids $S_{C_a^R}$ with a fixed radius $R>0$ and (from left to right) $a<-2R$, $a=-2R$, $-2R<a<0$, $a=0$, $a>0$.}
		\label{fig:transonducycloidsycloids}
	\end{center}
\end{figure}

As a summary, we compile in the next result the classification of rotational surfaces with Gauss curvature inversely proportional to the distance from the surface to the axis of revolution.

\begin{theorem}\label{th:Class KGinverse}
	The transonducycloids $S_{C_a^R}$, defined as the surfaces of revolution generated by the rotation of the cycloids $C_a^R$ of radius $R>0$ around its base after an $a$-translation, $a\in \R$, in the orthogonal direction to its base, are the only rotational surfaces with Gauss curvature $K_G(x)=\mu/x$, $ \mu >0$, $ 0\neq x$ being the distance from the surface to the axis of revolution and $\mu=\frac{1}{4R}$.
\end{theorem}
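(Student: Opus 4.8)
The plan is to specialize Theorem~\ref{th:Prescribe_KGauss} to $K_{\text G}(x)=\mu/x$ and then recognize the admissible generatrices as the $x$-translates of a single cycloid arch. Substituting into \eqref{eq:mom-KGauss} gives $\mathcal K(x)^2=2\int x(\mu/x)\,dx=2\mu x+c$ for an integration constant $c\in\R$, so the surface is determined, up to $z$-translation (Corollary~\ref{cor:keysurfaces}) and once $c$ is fixed, by
\[
\mathcal K(x)=\sqrt{2\mu x+c},
\]
where $x$ ranges over the interval $-c/(2\mu)\le x\le(1-c)/(2\mu)$ on which $0\le\mathcal K(x)^2\le1$; note this interval has length $1/(2\mu)$, which will be the base $2R$ of a cycloid arch of radius $R=1/(4\mu)$. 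The case $c=0$ is immediate: $\mathcal K(x)=\sqrt{x}/\sqrt{1/(2\mu)}$ is exactly the momentum $\sqrt{x}/\sqrt{2R}$ of the onducycloid of radius $R$ recalled in Section~\ref{Sect2}, with $R=1/(4\mu)$.

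For $c\neq 0$ I would compute the geometric linear momentum of the cycloid family $C_a^R$, $x=a+R(1-\cos t)$, $z=R(t-\sin t-\pi)$, directly from Remark~\ref{re:no ppa}. Using the half-angle identities $z'(t)=R(1-\cos t)=2R\sin^2(t/2)$ and $|\alpha'(t)|=R\sqrt{\sin^2 t+(1-\cos t)^2}=2R\,|\sin(t/2)|$, one gets $\mathcal K=|\sin(t/2)|$, and since $x-a=2R\sin^2(t/2)$ this is $\mathcal K(x)=\sqrt{(x-a)/(2R)}$ for $a\le x\le a+2R$, i.e.\ \eqref{eq:K_cycloids}. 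Comparing the two expressions for $\mathcal K(x)$ as functions of $x$ forces $2\mu=1/(2R)$ and $c=-a/(2R)$, that is $R=1/(4\mu)$ and $a=-2Rc$; moreover the resulting $x$-interval $[a,a+2R]$ coincides with $[-c/(2\mu),(1-c)/(2\mu)]$, confirming consistency. Hence the generatrix of any rotational surface with $K_{\text G}(x)=\mu/x$ agrees, up to $z$-translation (and the harmless reflection $x\mapsto-x$), with the cycloid $C^R_{-2Rc}$; Corollary~\ref{cor:keysurfaces} then identifies the surface with the transonducycloid $S_{C_a^R}$ and completes the proof.

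The two computations involved are exactly the ``exercises'' flagged in the text: that \eqref{eq:mom-KGauss} integrates to $2\mu x+c$ (trivial) and that the translated cycloid's momentum is \eqref{eq:K_cycloids} (the half-angle simplification above). The only conceptual point, and the step one must not skip, is the observation that the whole one-parameter affine family $\{x\mapsto\sqrt{2\mu x+c}:c\in\R\}$ of admissible momenta is precisely the set of momenta of the $x$-translates of one fixed cycloid, with the radius $R=1/(4\mu)$ pinned down by $\mu$ and the constant $c$ encoding the translation $a=-2Rc$; once this is seen, the rest is bookkeeping with Corollary~\ref{cor:keysurfaces}.
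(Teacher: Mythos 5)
Your proposal is correct and follows essentially the same route as the paper: substitute $K_G(x)=\mu/x$ into \eqref{eq:mom-KGauss} to get $\mathcal K(x)=\sqrt{2\mu x+c}$, compute via Remark~\ref{re:no ppa} that the translated cycloids $C_a^R$ have momentum \eqref{eq:K_cycloids}, and match the two families by $R=1/(4\mu)$, $a=-2Rc$, concluding with Corollary~\ref{cor:keysurfaces}. Your added remarks on the admissible $x$-interval of length $2R$ and the explicit half-angle computation are consistent with, and slightly more detailed than, the paper's exposition.
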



\subsection{Constraint between the mean curvature and the Gaussian curvature on a rotational surface.}

An interesting consequence of combining Theorems \ref{th:Prescribe_H} and \ref{th:Prescribe_KGauss} is that the mean and Gaussian curvatures on a rotational surface cannot be prefixed simultaneously in terms of arbitrary functions depending on the distance from the surface to the axis of revolution. We now prove that they are necessarily related. Following our notation, suppose that we prescribe continuous functions $H=H(x)$ and $K_{\text{G}}=K_{\text{G}}(x)$, $x\in D \subseteq \R $. Then, using \eqref{eq:mom-H} and \eqref{eq:mom-KGauss}, we arrive at a double determination of the geometric linear momentum characterizing the rotational surface:
\begin{equation}\label{eq:mom-both}
	x\mathcal{K}(x)=2\int xH(x)dx, \quad 	\mathcal{K}(x)^2=2\int xK_G(x)dx.
\end{equation}
Eliminating $\mathcal K = \mathcal K(x)$ in \eqref{eq:mom-both}, we deduce the constraint given by
\begin{equation}\label{eq:HlinkKG}
\left(\int x H(x)dx\right)^2=\dfrac{x^2}{2}\int x K_G(x)dx
\end{equation}
Derivating with respect to $x$ in \eqref{eq:HlinkKG}, we conclude the following result.
\begin{proposition}\label{propo:KG from H}
Let $S_{(x,z)}$ be a rotational surface with mean curvature given by a continuous function $H=H(x)$, $x\in D \subseteq \R $. Then the Gauss curvature of $S_{(x,z)}$ is given by
\begin{equation}\label{eq:KG one parameter}
	K_G(x)=\dfrac{2}{x}\dfrac{d}{d x}\left(\dfrac{1}{x^2}\left(\int x H(x)dx\right)^2\right).
\end{equation}
\end{proposition}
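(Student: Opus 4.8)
The plan is short, since most of the work already sits in the paragraph preceding the statement. Starting from the given mean curvature $H=H(x)$, Theorem \ref{th:Prescribe_H} (equation \eqref{eq:mom-H}) forces the geometric linear momentum of the generatrix to be $\mathcal K(x)=\frac{2}{x}\int xH(x)\,dx$, hence
\[
\mathcal K(x)^2=\frac{4}{x^2}\left(\int xH(x)\,dx\right)^2 .
\]
On the other hand, whatever the Gauss curvature $K_G=K_G(x)$ of the same surface turns out to be, Theorem \ref{th:Prescribe_KGauss} (equation \eqref{eq:mom-KGauss}) forces $\mathcal K(x)^2=2\int xK_G(x)\,dx$. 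Equating the two expressions for $\mathcal K(x)^2$ and cancelling a factor $2$ gives exactly the constraint \eqref{eq:HlinkKG}, which in the form convenient for us reads $\int xK_G(x)\,dx=\frac{2}{x^2}\left(\int xH(x)\,dx\right)^2$ (all this on the locus where $x$ is non-constant and $x\neq 0$, so that the division by $x$ is legitimate).

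It then remains only to differentiate this identity with respect to $x$: by the fundamental theorem of calculus the left-hand side has derivative $xK_G(x)$, so
\[
xK_G(x)=2\,\frac{d}{dx}\!\left(\frac{1}{x^2}\left(\int xH(x)\,dx\right)^2\right),
\]
and dividing through by $x$ yields \eqref{eq:KG one parameter}. The ``one-parameter family'' phrasing is then immediate: the only indeterminacy in the right-hand side is the additive constant in the antiderivative $\int xH(x)\,dx$ --- precisely the parameter appearing in Theorem \ref{th:Prescribe_H} --- and letting it vary produces a one-parameter family of admissible functions $K_G$.

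I expect no real obstacle here, as the argument is a substitution followed by one differentiation of an already differentiable expression; only continuity of $H$ is used. The sole points deserving a word of care are bookkeeping ones: $x$ must be non-constant (the right circular cylinder is excluded, consistently with Corollary \ref{cor:keysurfaces} and Remark \ref{re:cylinder}), and one works away from $x=0$, so that every quantity in sight is a genuine function of $x$ and the divisions are valid.
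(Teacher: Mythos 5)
Your argument is correct and coincides with the paper's own proof: the authors likewise equate the two determinations of $\mathcal K(x)^2$ coming from \eqref{eq:mom-H} and \eqref{eq:mom-KGauss} to obtain the constraint \eqref{eq:HlinkKG}, and then differentiate with respect to $x$ to reach \eqref{eq:KG one parameter}. Your bookkeeping remarks ($x$ non-constant, $x\neq 0$) are consistent with the hypotheses already in place via Corollary \ref{cor:keysurfaces}.
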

\begin{remark}\label{re:KG uniparametric}
	An immediate consequence of Proposition \ref{propo:KG from H} is that once prescribed $H=H(x)$ for a rotational surface $S_{(x,z)}$, its Gauss curvature $K_G=K_G(x)$ belongs to a one-parameter family of functions parameterized by the integration constant of the integral in \eqref{eq:KG one parameter}. For example, a simple calculus in \eqref{eq:KG one parameter} leads to the the Gauss curvature of the rotational $H_0$-surfaces, i.e.\ surfaces with constant mean curvature $H_0\neq 0$, given by $K_G(x)=H_0^2-4\Gamma^2/x^4$, $\Gamma\in \R$. The round sphere is the surface with $\Gamma =0$ in the family of rotational $H_0$-surfaces.
\end{remark}
Inspired by the family of rotational $H_0$-surfaces, bearing in mind Remark \ref{re:KG uniparametric}, we introduced the notion of \textit{Gaussian constant} for an interesting class of rotational surfaces generalizing $H_0$-surfaces.
\begin{definition}\label{def:Gaussian constant}
	Let $S_{(x,z)}$ be a rotational surface with mean curvature $H$ given by
	$$H(x)=\mu \, x^n, \, x>0, \quad \mu >0,  \, n \neq -2. $$ 
	The \textit{Gaussian constant} $\Gamma\in \R$ is defined by the equality
	$$
	\int x H(x)dx =\frac{\mu}{n+2}x^{n+2} + \Gamma.
	$$
\end{definition}
Making use of Proposition \ref{propo:KG from H} and the concept of Gaussian constant introduced in Definition \ref{def:Gaussian constant}, we provide the next uniqueness result for the Hopf-Kühnel surfaces defined in Definition \ref{def:Kuhnel}.

\begin{theorem}\label{Th:H KG mononomial}
	Let $S_{(x,z)}$ be a rotational surface with mean curvature $H$ given by 
	\begin{equation}\label{eq:H monomial}
		H(x)=\mu \, x^n, \, x>0, \quad \mu >0,  \, n \neq -2.
	\end{equation}
Then its Gauss curvature is given by
\begin{equation}\label{eq:KG monomial}
K_G(x) = \dfrac{4(n+1)\mu^2}{(n+2)^2} \, x^{2n} + \dfrac{4n  \Gamma  \mu }{n+2}\, x^{n-2}
-	\dfrac{4\Gamma^2}{x^4},
\end{equation}
with $\Gamma \in \R$ the Gaussian constant (see Definition \ref{def:Gaussian constant}.)
Moreover, the only rotational surface with mean curvature given by \eqref{eq:H monomial} and null Gaussian constant, i.e.\ $\Gamma =0$ in \eqref{eq:KG monomial}, is the Hopf-Kühnel surface $S_{\alpha_q}$, with $q=n+1$, when $n\neq -1$, and the cone with opening $\arcsin 2\mu$ when $n=-1$.
\end{theorem}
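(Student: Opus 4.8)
The plan is to feed the monomial mean curvature $H(x)=\mu x^{n}$ into the machinery of Section~\ref{Sect4} and then read off the Gauss curvature directly from the geometric linear momentum, using that a rotational surface is determined by $\mathcal K=\mathcal K(x)$.

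First I would compute $\mathcal K(x)$. By Theorem~\ref{th:Prescribe_H}, equation~\eqref{eq:mom-H}, a rotational surface with $H(x)=\mu x^{n}$ has generatrix momentum satisfying $x\,\mathcal K(x)=2\int xH(x)\,dx$. Since $n\neq-2$, this integrates to $x\,\mathcal K(x)=\tfrac{2\mu}{n+2}x^{n+2}+2\Gamma$, where $\Gamma\in\R$ is precisely the Gaussian constant of Definition~\ref{def:Gaussian constant}; hence $\mathcal K(x)=\tfrac{2\mu}{n+2}x^{n+1}+\tfrac{2\Gamma}{x}$, and $\Gamma$ is exactly the parameter of the one-parameter family of Theorem~\ref{th:Prescribe_H}. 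Then I would differentiate to get $\mathcal K'(x)=\tfrac{2\mu(n+1)}{n+2}x^{n}-\tfrac{2\Gamma}{x^{2}}$ and substitute into \eqref{eq:KGauss}, $K_{\text G}=\mathcal K\mathcal K'/x$. Expanding the product, the two cross terms share the power $x^{n-1}$ and their coefficients $-1$ and $n+1$ add to $n$, so after dividing by $x$ one lands exactly on \eqref{eq:KG monomial}. Equivalently — and this is the cross-check I would include — one substitutes $\int xH(x)\,dx=\tfrac{\mu}{n+2}x^{n+2}+\Gamma$ into Proposition~\ref{propo:KG from H}, expands the square, differentiates termwise, and multiplies by $2/x$; the two computations agree. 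This step is a routine, if slightly lengthy, expansion of a polynomial in $x^{\pm1}$, with no conceptual difficulty.

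For the ``moreover'' part I would set $\Gamma=0$ in the formula for $\mathcal K$, obtaining $\mathcal K(x)=\tfrac{2\mu}{n+2}x^{n+1}$. If $n\neq-1$ this is a nonzero monomial $c\,x^{q}$ with $q=n+1\neq0$; since $\mathcal K$ is defined only up to sign, we may take the coefficient positive and write it as $x^{q}/a^{q}$ with $a=\bigl|\tfrac{n+2}{2\mu}\bigr|^{1/q}>0$, which is exactly \eqref{eq:K Kuhnel}. By Corollary~\ref{cor:keysurfaces} (together with Theorem~\ref{Th:Kuhnel} and Definition~\ref{def:Kuhnel}) the surface is then the Hopf--K\"uhnel surface $S_{\alpha_q}$ with $q=n+1$, living on the $x$-range where $\mathcal K(x)^{2}<1$. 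If $n=-1$, then $\mathcal K\equiv2\mu$ is constant, which forces $2\mu<1$, and by the list of characterizations in Section~\ref{Sect2} the surface is the circular cone of opening $\arcsin2\mu$.

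I do not expect a serious obstacle: the computation is forced once $\mathcal K(x)$ is known, and uniqueness is handed over to Corollary~\ref{cor:keysurfaces}. The only points that need care — the closest thing to a subtlety — occur in the $\Gamma=0$ case: one must invoke the sign indeterminacy of $\mathcal K$ to match the positive coefficient in \eqref{eq:K Kuhnel} (relevant when $n<-2$), observe that the admissibility condition $\mathcal K(x)^{2}<1$ merely restricts the domain of $x$ rather than the identification of the surface, and note that $n=-1$ forces $\mu<1/2$ for the constant momentum to be admissible.
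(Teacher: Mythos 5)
Your proposal is correct and follows essentially the same route as the paper: both derive $\mathcal K(x)=\tfrac{2\mu}{n+2}x^{n+1}+\tfrac{2\Gamma}{x}$ from Theorem~\ref{th:Prescribe_H}, obtain \eqref{eq:KG monomial} via \eqref{eq:KGauss} (and, as a consistency check, via Proposition~\ref{propo:KG from H}), and settle the $\Gamma=0$ case by matching the momentum with \eqref{eq:K Kuhnel} through Corollary~\ref{cor:keysurfaces}. Your explicit remarks on the sign indeterminacy of $\mathcal K$ when $n<-2$ and on the admissibility condition $\mathcal K(x)^2<1$ are welcome details that the paper leaves implicit.
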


\begin{proof}
Using Definition \ref{def:Gaussian constant} and putting \eqref{eq:H monomial} in \eqref{eq:KG one parameter}, we easily reach \eqref{eq:KG monomial}. On the other hand, Theorem \ref{th:Prescribe_H} implies that $S_{(x,z)}$ is determined  by the geometric linear momentum
\begin{equation}\label{eq:mom H monomial}
\mathcal K (x)=\dfrac{2\mu}{n+2}\, x^{n+1}+ \dfrac{c}{x}, \  c\in \R.
\end{equation}
Now, using \eqref{eq:mom H monomial} in \eqref{eq:KGauss}, we arrive again at \eqref{eq:KG monomial} considering $c=2\Gamma$.
In this way, if we take $\Gamma=0=c$, then Corollary \ref{cor:keysurfaces} and \eqref{eq:mom H monomial} says  to us that $S_{(x,z)}$ is uniquely determined, up to $z$-translations, by $\mathcal K (x)=\frac{2\mu}{n+2}\, x^{n+1}$. Looking at \eqref{eq:K Kuhnel}, this corresponds to the Hopf-Kühnel surface $S_{\alpha_q}$, with $q=n+1$ provided that $n+1\neq 0$. If $n+1=0$, we arrive at $\mathcal K =2\mu$ and this leads to the cone with opening $\arcsin 2\mu$ (see Section \ref{Sect2}).
\end{proof}

\begin{remark}
Recall that we completely classify in Theorem \ref{th:Class_Hinvx} the rotational surfaces corresponding to the case $n=-1$	in Theorem \ref{Th:H KG mononomial}.  The Gauss curvature of these surfaces is, using that $c=2\Gamma$,  given by 
$
K_G(x) = - 2c \mu/ x^3 -c^2/x^4. 
$
\end{remark}

As direct consequences of Theorem \ref{Th:H KG mononomial}, looking at Section \ref{Sect2}, we can deduce the following uniqueness results for the Myllar balloons ($q=2 \Leftrightarrow n=1$), the onducycloids ($q=1/2 \Leftrightarrow n=-1/2$) and the Flamm's paraboloids ($q=-1/2 \Leftrightarrow n=-3/2$).

\begin{corollary}\label{cor:Mylar}
	The Mylar balloons are the only rotational surfaces with mean curvature $H(x)=\mu \, x$, $\mu >0$,
	and Gauss curvature $K_G (x)=\lambda \, x^2$, $\lambda \neq 0$, $x$ being the distance from the surface to the axis of revolution. In addition, necessarily $\lambda=8 \mu^2/9= 2/r^2$, with $r$ the inflated radius of the balloon.
\end{corollary}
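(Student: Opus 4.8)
The plan is to specialize Theorem \ref{Th:H KG mononomial} to the case $n=1$, which forces $q=n+1=2$, and then invoke the characterization of the Mylar balloon among Hopf-Kühnel surfaces already recorded in Section \ref{Sect2}. Concretely, assume $S_{(x,z)}$ has mean curvature $H(x)=\mu\,x$ with $\mu>0$. By Theorem \ref{Th:H KG mononomial} with $n=1$, its Gauss curvature must be
$$
K_G(x)=\frac{4\cdot 2\cdot \mu^2}{9}\,x^{2}+\frac{4\Gamma\mu}{3}\,x^{-1}-\frac{4\Gamma^2}{x^4}
=\frac{8\mu^2}{9}\,x^2+\frac{4\Gamma\mu}{3x}-\frac{4\Gamma^2}{x^4},
$$
for some Gaussian constant $\Gamma\in\R$. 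The hypothesis $K_G(x)=\lambda\,x^2$ with $\lambda\neq 0$ means the $x^{-1}$ and $x^{-4}$ terms must vanish; since $\mu>0$ this forces $\Gamma=0$, and then $\lambda=8\mu^2/9\neq 0$ automatically.

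Next I would apply the second half of Theorem \ref{Th:H KG mononomial}: with $n=1\neq -1$ and null Gaussian constant, the surface is the Hopf-Kühnel surface $S_{\alpha_q}$ with $q=n+1=2$. By the list following Theorem \ref{Th:Kuhnel} (item $q=2$), $S_{\alpha_2}$ is precisely the Mylar balloon, determined by the geometric linear momentum $\mathcal K(x)=x^2/r^2$ where $r>0$ is the inflated radius. Conversely, the Mylar balloon does have $H(x)=\mu x$ for the appropriate $\mu$: from \eqref{eq:H} with $\mathcal K(x)=x^2/r^2$ one gets $2H=2x/r^2+x/r^2=3x/r^2$, so $H(x)=\frac{3}{2r^2}x$, i.e.\ $\mu=\frac{3}{2r^2}$, and $K_G(x)=x^2\cdot(2x/r^2)/x \cdot \frac{1}{x}$—more directly from \eqref{eq:KGauss}, $K_G=\frac{(x^2/r^2)(2x/r^2)}{x}=\frac{2x^2}{r^4}$, so $\lambda=2/r^4$.

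Finally I would reconcile the two expressions for $\lambda$. From $\mu=\frac{3}{2r^2}$ we get $\mu^2=\frac{9}{4r^4}$, hence $\frac{8\mu^2}{9}=\frac{2}{r^4}=\lambda$, confirming $\lambda=8\mu^2/9$; and writing this in terms of $r$ gives $\lambda=2/r^4$. (If the intended statement is $\lambda=2/r^2$ with a different normalization of the inflated radius, the same chain of equalities applies after the corresponding rescaling; in any case the relation $\lambda=8\mu^2/9$ is the substantive one and follows directly.) There is essentially no obstacle here: the entire content is packaged in Theorem \ref{Th:H KG mononomial}, and the corollary is the observation that prescribing $K_G$ to be a pure monomial $\lambda x^2$ annihilates the Gaussian constant. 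The only point requiring a line of care is checking that $\lambda\neq 0$ is consistent rather than vacuous, which it is since $\mu>0$ forces $\lambda=8\mu^2/9>0$.
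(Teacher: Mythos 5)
Your proof is correct and follows exactly the route the paper intends: specialize Theorem \ref{Th:H KG mononomial} to $n=1$, observe that the hypothesis $K_G(x)=\lambda x^2$ kills the $x^{-1}$ and $x^{-4}$ terms and (since $\mu>0$) forces $\Gamma=0$, and then invoke the second half of that theorem to land on the Hopf-K\"uhnel surface with $q=2$, i.e.\ the Mylar balloon. You are also right to flag the final constant: with $\mathcal K(x)=x^2/r^2$ one gets $\mu=3/(2r^2)$ and $\lambda=8\mu^2/9=2/r^4$, so the statement's ``$2/r^2$'' is a typo (it is dimensionally inconsistent as written), and your $2/r^4$ is the correct value.
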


\begin{corollary}\label{cor:Onducycloids}
	The onducycloids are the only rotational surfaces with mean curvature $H(x)=\mu /\sqrt  x$, $\mu >0$,
	and Gauss curvature $K_G (x)=\lambda / x$, $\lambda \neq 0$, $x$ being the distance from the surface to the axis of revolution. In addition, necessarily $\lambda=\frac89 \mu^2= \frac{1}{4R}$, with $R$ the radius of the anticycloid.
\end{corollary}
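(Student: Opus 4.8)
The plan is to obtain Corollary \ref{cor:Onducycloids} as the special case $n=-\tfrac12$ of Theorem \ref{Th:H KG mononomial}. Indeed $H(x)=\mu/\sqrt x=\mu\,x^{-1/2}$ is of monomial type with $n=-\tfrac12\neq -2$, and the associated Hopf-Kühnel index is $q=n+1=\tfrac12$, which is precisely the onducycloid (see Section \ref{Sect2}). So the whole statement should reduce to bookkeeping in the formulas already established.

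First I would substitute $n=-\tfrac12$ into \eqref{eq:KG monomial}. Using $n+1=\tfrac12$, $n+2=\tfrac32$, $2n=-1$ and $n-2=-\tfrac52$, this gives
$$K_G(x)=\frac{8\mu^2}{9}\,\frac1x-\frac{4\Gamma\mu}{3}\,x^{-5/2}-\frac{4\Gamma^2}{x^4},$$
where $\Gamma\in\R$ is the Gaussian constant of Definition \ref{def:Gaussian constant}. The hypothesis $K_G(x)=\lambda/x$ then forces the coefficients of $x^{-5/2}$ and of $x^{-4}$ to vanish, since $x^{-1}$, $x^{-5/2}$, $x^{-4}$ are linearly independent functions on any interval; in particular $\Gamma^2=0$, so $\Gamma=0$. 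Consequently $\lambda=\tfrac89\mu^2$ automatically (which is nonzero, matching the hypothesis $\lambda\neq0$), and by the second part of Theorem \ref{Th:H KG mononomial} the condition $\Gamma=0$ together with $n=-\tfrac12\neq-1$ identifies $S_{(x,z)}$ with the Hopf-Kühnel surface $S_{\alpha_{1/2}}$, i.e.\ the onducycloid.

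To pin down the remaining constant, I would recall from Section \ref{Sect2} that the onducycloid of radius $R$ is determined by $\mathcal K(x)=\sqrt x/\sqrt{2R}$; plugging this into \eqref{eq:KGauss} yields $K_G(x)=\mathcal K(x)\mathcal K'(x)/x=\tfrac{1}{4R}\cdot\tfrac1x$, hence $\lambda=\tfrac{1}{4R}$, equivalently $R=\tfrac{1}{4\lambda}$ (this also agrees with the $c=0$ case in the discussion preceding Theorem \ref{th:Class KGinverse}). The converse inclusion — that every onducycloid does have $H(x)=\mu/\sqrt x$ and $K_G(x)=\lambda/x$ with $\lambda=\tfrac89\mu^2$ — follows by substituting the same $\mathcal K(x)=\sqrt x/\sqrt{2R}$ into \eqref{eq:prin curv} and \eqref{eq:H}.

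There is essentially no serious obstacle here: the result is a corollary of Theorem \ref{Th:H KG mononomial} plus a routine substitution. The only point deserving a word of care is the word ``only'' in the statement, namely that requiring the right-hand side of \eqref{eq:KG monomial} to be a single multiple of $1/x$ genuinely forces $\Gamma=0$ rather than merely a relation among $\mu$ and $\Gamma$; this is exactly where the linear independence of the distinct real powers of $x$ is invoked. (The parallel argument for Corollary \ref{cor:Mylar} is the case $n=1$, where the three powers appearing are $x^2$, $x^{-1}$, $x^{-4}$.)
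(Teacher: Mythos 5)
Your proposal is correct and matches the paper's intended argument: the paper derives this corollary precisely as the case $n=-\tfrac12$ (so $q=n+1=\tfrac12$) of Theorem \ref{Th:H KG mononomial}, with the linear independence of the powers $x^{-1}$, $x^{-5/2}$, $x^{-4}$ forcing $\Gamma=0$ and hence $\lambda=\tfrac89\mu^2$. Your identification $\lambda=\tfrac{1}{4R}$ via $\mathcal K(x)=\sqrt{x}/\sqrt{2R}$ in \eqref{eq:KGauss} is also exactly the computation the paper relies on.
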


\begin{corollary}\label{cor:Flamm paraboloids}
	The Flamm's paraboloids are the only rotational surfaces with mean curvature $H(x)=\frac{\mu}{x\sqrt  x}$, $\mu >0$,
	and Gauss curvature $K_G (x)=\lambda / x^3$, $\lambda \neq 0$, $x$ being the distance from the surface to the axis of revolution. In addition, necessarily $\lambda=-8 \mu^2= -r_S /2$, with $r_S$ the Schwarzchild radius of the paraboloid.
\end{corollary}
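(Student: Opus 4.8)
The plan is to read this statement off Theorem \ref{Th:H KG mononomial} specialized to $n=-3/2$. First I would observe that $H(x)=\mu/(x\sqrt x)=\mu\,x^{-3/2}$ is of the monomial form \eqref{eq:H monomial} with $n=-3/2\neq-2$, so Theorem \ref{Th:H KG mononomial} applies: $S_{(x,z)}$ belongs to the one-parameter family indexed by its Gaussian constant $\Gamma$, and its Gauss curvature is forced by \eqref{eq:KG monomial}. Substituting $n=-3/2$ (so $n+1=-1/2$, $n+2=1/2$, $2n=-3$, $n-2=-7/2$) in \eqref{eq:KG monomial} gives
\begin{equation*}
K_G(x)=-\frac{8\mu^2}{x^3}-\frac{12\Gamma\mu}{x^{7/2}}-\frac{4\Gamma^2}{x^4}.
\end{equation*}

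Next I would impose the hypothesis $K_G(x)=\lambda/x^3$. Since the exponents $-3$, $-7/2$, $-4$ are pairwise distinct, matching the two expressions forces the coefficients of $x^{-7/2}$ and $x^{-4}$ to vanish; because $\mu>0$, either one of these already gives $\Gamma=0$, and then $\lambda=-8\mu^2$, which is nonzero precisely because $\mu>0$ — so the assumption $\lambda\neq0$ in the statement is automatically fulfilled. With $\Gamma=0$ and $n=-3/2\neq-1$, the uniqueness part of Theorem \ref{Th:H KG mononomial} identifies $S_{(x,z)}$, up to $z$-translations, with the Hopf-Kühnel surface $S_{\alpha_q}$ for $q=n+1=-1/2$, which by the list in Section \ref{Sect2} is exactly the Flamm's paraboloid.

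It remains to express the constants through the Schwarzschild radius and to record the converse. Setting $\Gamma=0$ in \eqref{eq:mom-H} (equivalently in Definition \ref{def:Gaussian constant}) yields $\mathcal K(x)=\frac{2\mu}{n+2}x^{n+1}=4\mu/\sqrt x$; comparing with the geometric linear momentum $\mathcal K(x)=\sqrt{r_S}/\sqrt x$ of the Flamm's paraboloid of Schwarzschild radius $r_S$ (Section \ref{Sect2}) gives $r_S=16\mu^2$, hence $\lambda=-8\mu^2=-r_S/2$. Conversely, substituting $\mathcal K(x)=\sqrt{r_S}/\sqrt x$ into \eqref{eq:H} and \eqref{eq:KGauss} confirms that the Flamm's paraboloid indeed has $H(x)=\mu/(x\sqrt x)$ and $K_G(x)=\lambda/x^3$ with these values of $\mu$ and $\lambda$. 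No genuine difficulty arises: the whole argument is the bookkeeping of exponents in \eqref{eq:KG monomial} together with the matching of one multiplicative constant, the only subtlety being that the sign hypothesis $\mu>0$ is exactly what makes both $\Gamma=0$ and $\lambda\neq0$ come out.
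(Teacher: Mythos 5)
Your proposal is correct and follows exactly the paper's intended route: the paper derives this corollary as a direct consequence of Theorem \ref{Th:H KG mononomial} with $n=-3/2$ (so $q=n+1=-1/2$), matching exponents in \eqref{eq:KG monomial} to force $\Gamma=0$ and then reading off $r_S=16\mu^2$, hence $\lambda=-8\mu^2=-r_S/2$. Your computation of the coefficients and the identification via the geometric linear momentum $\mathcal K(x)=\sqrt{r_S}/\sqrt{x}$ are all accurate.
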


Using the same technique, we finish providing a new characterization of the tori of revolution.
Recall from Section \ref{Sect2} that the torus of revolution with major radius $|a|\neq 0$ and minor radius $R>0$, given by $(\sqrt{x_1^2+x_2^2}-a)^2+x_3^2=R^2$, is uniquely determined by the geometric linear momentum  $\mathcal K (x)=(x-a)/R$.
Then \eqref{eq:H} and \eqref{eq:KGauss} imply that
$H(x)=\frac{1}{R}-\frac{a}{2Rx}$ and $K_G(x)=\frac{1}{R^2}-\frac{a}{R^2x}$.

\begin{corollary}\label{cor:tori}
	The tori of revolution are the only rotational surfaces with mean curvature $H(x)=\lambda +\mu / x$, $\lambda >0$, $\mu \neq 0$,
	and Gauss curvature $K_G (x)=\gamma + \nu /x$, $\gamma >0$ $\nu \neq 0$, $x$ being the distance from the surface to the axis of revolution. In addition, necessarily $\gamma=\lambda^2= 1/R^2$, with $R$ the minor radius of the torus, and $\nu=2 \lambda \mu= -a/R^2$, with $a$ the major radius of the torus.	
\end{corollary}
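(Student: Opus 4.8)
The plan is to mimic the proof of Theorem~\ref{Th:H KG mononomial}, working directly with the geometric linear momentum. First I would apply Theorem~\ref{th:Prescribe_H} to the prescribed mean curvature $H(x)=\lambda+\mu/x$: formula~\eqref{eq:mom-H} gives $x\,\mathcal K(x)=2\int x(\lambda+\mu/x)\,dx=\lambda x^{2}+2\mu x+c$ for some $c\in\R$, so that
\[
\mathcal K(x)=\lambda x+2\mu+\frac{c}{x},\qquad \mathcal K'(x)=\lambda-\frac{c}{x^{2}}.
\]
Substituting this into the Gauss curvature formula~\eqref{eq:KGauss}, a short cancellation yields
\[
K_G(x)=\frac{\mathcal K(x)\mathcal K'(x)}{x}=\lambda^{2}+\frac{2\lambda\mu}{x}-\frac{2\mu c}{x^{3}}-\frac{c^{2}}{x^{4}}.
\]

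Next I would impose that $K_G$ has the prescribed shape $\gamma+\nu/x$. Since the functions $1,\ 1/x,\ 1/x^{3},\ 1/x^{4}$ are linearly independent, the coefficients $-2\mu c$ and $-c^{2}$ must both vanish; as $\mu\neq0$, this already forces $c=0$ (and then the $x^{-4}$ term is automatically killed as well). Hence $\mathcal K(x)=\lambda x+2\mu$, and matching constants gives $\gamma=\lambda^{2}$ and $\nu=2\lambda\mu$, so that $\gamma>0$ and $\nu\neq0$ are automatic consequences of $\lambda>0$ and $\mu\neq0$.

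Finally I would identify the surface. Writing $\mathcal K(x)=\lambda x+2\mu=(x-a)/R$ with $R:=1/\lambda>0$ and $a:=-2\mu/\lambda\neq0$, Corollary~\ref{cor:keysurfaces} together with the list in Section~\ref{Sect2} (the torus of revolution with major radius $|a|$ and minor radius $R$ is the one with momentum $(x-a)/R$) shows that $S_{(x,z)}$ is exactly that torus, with minor radius $R=1/\lambda$ and major radius $|a|=2|\mu|/\lambda$. The claimed relations follow: $\gamma=\lambda^{2}=1/R^{2}$ and $\nu=2\lambda\mu=-a/R^{2}$. For the converse, the computation recorded immediately before the statement shows that the torus with momentum $(x-a)/R$ has $H(x)=1/R-a/(2Rx)$ and $K_G(x)=1/R^{2}-a/(R^{2}x)$, both of the prescribed form. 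There is essentially no obstacle in the argument; the only point worth care is that it is the hypothesis $\mu\neq0$---rather than $\nu\neq0$---that eliminates the integration constant $c$, thereby ruling out the non-toroidal momenta $\lambda x+2\mu+c/x$ and, in particular, distinguishing a genuine torus ($a\neq0$) from the round sphere ($a=0$, excluded since $\mu\neq0$).
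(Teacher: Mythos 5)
Your proposal is correct and follows exactly the route the paper intends: apply Theorem \ref{th:Prescribe_H} to get $\mathcal K(x)=\lambda x+2\mu+c/x$, compute $K_G$ from \eqref{eq:KGauss}, force $c=0$ by matching with $\gamma+\nu/x$, and identify the resulting momentum $(x-a)/R$ with the torus via Corollary \ref{cor:keysurfaces} and the list in Section \ref{Sect2}. The computation and the matching of constants $\gamma=\lambda^2=1/R^2$, $\nu=2\lambda\mu=-a/R^2$ all check out.
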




\begin{thebibliography}{1}\bibliographystyle{alpha}


\bibitem{BGM20a} A.~Bueno, J.A.~G\'alvez and P.~Mira. {\em Rotational hypersurfaces of prescribed mean curvature.}  J.\ Differential Equations \textbf{268} (2020), 2394--2413.

\bibitem{BGM20b} A.~Bueno, J.A.~G\'alvez and P.~Mira. {\em The global geometry of surfaces with prescribed mean curvature in $\R^3$.}  Trans.\ Amer.\ Math.\ Soc.\ \textbf{373} (2020), 4437--4467.

\bibitem{BO22} A.~Bueno, and I.~Ortiz. {\em Rotational surfaces of prescribed Gauss curvature in $\R^3$.}   arXiv preprint arXiv:2201.07057, 2022 - arxiv.org.

\bibitem{BO23} A.~Bueno, and I.~Ortiz. {\em Surfaces of prescribed linear Weingarten curvature in $\R^3$.}   Proc.\ Roy.\ Soc.\ Edinburgh Sect.\ A \textbf{153}(4) (2023), 1347--1370.

\bibitem{CC22} P.~Carretero and I.~Castro. {\em A new approach to rotational Weingarten surfaces.} 
Mathematics {\bf 2022} 10(4), 578; https://doi.org/10.3390/math10040578

\bibitem{CCI16} I.~Castro and I.~Castro-Infantes.
{\em Plane curves with curvature depending on distance to a line.}
Diff.\ Geom.\ Appl.\ {\bf 44} (2016), 77--97.

\bibitem{CCIs20} I.~Castro, I.~Castro-Infantes and J.~Castro-Infantes.
{\em On a Problem of David Singer about Prescribing Curvature for Curves.}
Geom.\ Integrability \& Quantization {\bf 21} (2020), 100--117.

\bibitem{Ch45} S.S~Chern.
{\em Some new characterizations of the Euclidean sphere.}
Duke Math.\ J.\ {\bf 12} (1945), 279--290.

\bibitem{D90} G.~Darboux.
{\em Sur la surface dont la courbure totale est constante}.
Ann.\ Sci.\ \'Ec.\ Norm.\ Sup\'er.\ (3) {\bf 7} (1890), 9--18.

\bibitem{D41} C.~Delaunay.
{\em Sur la surface de revolution dont la courbure moyenne est constante}.
J.\ Math.\ Pures Appl.\ {\bf 6} (1841), 309--320.

\bibitem{E44} L.~Euler.
{\em Methodus inveniendi lineas curvas: maximi minimive proprietate gaudentessive solutio problematis isoperimetrici latissimo sensu accepti (in Latin)}. 
Opera Omnia: Series 1, Volume \textbf{24} (1744).

\bibitem{F93} R.~Ferr\'{e}ol.
{\em Encyclop\'{e}die des formes math\'{e}matiques remarquables.}
www.\ mathcurve.\ com

\bibitem{H51} H.~Hopf.
{\em \"{U}ber Fl\"{a}chen mit einer Relation zwischen den Hauptkr\"{ummungen}.}
Math.\ Nachr.\ {\bf 4} (1951), 232--249.

\bibitem{Ke80} K.~Kenmotsu.
{\em Surfaces of revolution with prescribed mean curvature.}
T\^{o}hoku  Math.\ J.\ {\bf 32} (1980), 147--153.

\bibitem{Kr60} M.D.~Kruskal.
{\em Maximal extension of Schwarzschild metric}. Physical Reviews \textbf{119}(5) (1960), 1743--1745.

\bibitem{K15} W.~Kühnel.
{\em Differential Geometry, Curves-Surfaces-Manifolds.}
AMS, 2015.

\bibitem{KS05} W.~Kühnel and M.~Steller.
{\em On closed Weingarten surfaces.}
Monatsh.\ Math.\ {\bf 146} (2005), 113--126.

\bibitem{LP20} R.~L\'opez and A.~P\'ampano.
{\em Classification of rotational surfaces in Euclidean space satisfying a lineal relation between their principal curvatures}. 
Math.\ Nachr.\  \textbf{293}(4) (2020), 735--753.

\bibitem{MO03} I.V.~Mladenov and J.~Oprea.
{\em The Mylar balloon revisited.}
Amer.\ Math.\ Monthly {\bf 110} (2003), 761--784.

\bibitem{P94} W.~Paulsen.
{\em What is the shape of a Mylar balloon?} 
Amer.\ Math.\ Monthly \textbf{101} (1994), 953--958.

\bibitem{S99} D.~Singer.
{\em Curves whose curvature depends on distance from the origin.}
Amer.\ Math.\ Monthly {\bf 106} (1999), 835--841.

\bibitem{S08} D.~Singer.
{\em Lectures on elastic curves and rods.}
Curvature and Variational Modeling in Physics and Biophysics,
AIP Conf.\ Proc.\ vol.\ {\bf 1002} (2008), 3--32.

\bibitem{W61} J.~Weingarten. 
{\em Ueber eine Klasse auf einander abwickelbarer Fl\"{a}chen}. 
J.\ Reine Angew.\ Math.\, \textbf{59} (1861) 382--393.

\end{thebibliography}
\end{document}